\newtheorem{theorem}{Theorem}[section]
\newtheorem{lemma}[theorem]{Lemma}
\newtheorem{corollary}[theorem]{Corollary}
\newtheorem{proposition}[theorem]{Proposition}
\theoremstyle{definition}
\newtheorem{definition}[theorem]{Definition}
\newtheorem{example}[theorem]{Example}
\newtheorem{remark}[theorem]{Remark}
\newcommand{\scr}[1]{\ensuremath{\mathscr{#1}}}
\renewcommand{\int}{{\rm int}}
\newcommand{\ZZ}{\mathbb{Z}}
\newcommand{\CC}{\mathbb{C}}
\newcommand{\NN}{{\mathbb{N}}}
\newcommand{\QQ}{{\mathbb{Q}}}
\newcommand{\OO}{\mathcal{O}}
\newcommand{\RR}{\mathbb{R}}
\newcommand{\Aff}{{\mathbb{A}}}
\newcommand{\PP}{\mathbb{P}}
\newcommand{\GG}{\mathbb{G}}
\newcommand{\Spec}{{\rm{Spec}\:}}
\newcommand{\lkah}[1]{\Omega^{\rm log}_{#1}}
\newcommand{\kah}[1]{\Omega_{#1}}
\newcommand{\lpbstrict}{\pb}
\newcommand{\pb}{{\arrow[dr, phantom, very near start, "\ulcorner"]}}
\def\overnorm#1{\overline{#1}\vphantom{#1}}
\renewcommand{\bar}[1]{\ensuremath{\overnorm{#1}}}
\newcommand{\bra}[1]{[{#1}]}
\newcommand{\LL}{\mathbb{L}}
\newcommand{\pt}{{\rm pt}}
\newcommand{\bb}[1]{\ensuremath{\mathbb{#1}}}
\newcommand{\var}[1]{{\rm Var}_{#1}}
\newcommand{\logvar}[1]{{\rm LogSch}_{#1}}
\newcommand{\klvar}[1]{K_0({\rm LogSch}_{#1})}
\renewcommand{\int}[1]{{\rm int}\,#1}
\newcommand{\ordkvar}[1]{K_0({\rm Var}_{#1})}
\newcommand{\KN}[1]{#1^{KN}}
\newcommand{\Elog}{{E^{\rm log}}}
\newcommand{\Elogbar}{\overline{E}^{\rm log}}
\renewcommand{\t}{t}
\newcommand{\tbar}{\overline{t}}
\DeclareMathOperator{\MHS}{MHS}
\newcommand{\Bitt}{{\rm Bitt}}
\renewcommand{\tilde}[1]{\widetilde{#1}}
\renewcommand{\hat}[1]{\widehat{#1}}
\newcommand{\lcf}{\mathsf{lcf}}
\title{The log Grothendieck ring of varieties}
\author{Andreas Gross}
\author{Leo Herr}
\author{David Holmes}
\author{Pim Spelier}
\author{Jesse Vogel}
\address{Addresses: Goethe University Frankfurt, Virginia Tech, Leiden University, Utrecht University, formerly Leiden University}
\email{herr@vt.edu}
\date{\today}
\begin{document}

\maketitle

\begin{abstract}
    We define a Grothendieck ring of varieties for log schemes. It is generated by one additional class ``$P$'' over the usual Grothendieck ring. 
    
    We show the na\"ive definition of log Hodge numbers does not make sense for all log schemes. We offer an alternative that does. 
\end{abstract}

\section{Introduction}

This article
\begin{itemize}
    \item Defines a Grothendieck group of varieties $\klvar{k}$ for log schemes over a base field $k$. 
    \item Provides a presentation of $\klvar{k}$ in terms of the usual Grothendieck group of varieties with one generator $P$ and one relation. 
    \item Deduces consequences for log Hodge numbers. 
\end{itemize}

\subsubsection*{Overview}

The mixed Hodge structure $\MHS(X)$ of a scheme $X$ over $\CC$ is an invariant that generalises many classical invariants such as Euler characteristics, Betti numbers, and Hodge numbers. This is a \emph{motivic invariant}, meaning that a nice closed embedding $Z \to X$ gives rise to an exact triangle \[\MHS(Z) \to \MHS(X) \to \MHS(X \setminus Z) \xrightarrow{+1}\] in the derived category of mixed Hodge structures. 
On the level of classical invariants such as the Euler characteristic, this simply means $\chi(X) = \chi(Z) + \chi(X \setminus Z)$.

When working with a family of schemes $X \to S$, one obtains a \emph{family} of mixed Hodge structures $\MHS(X/S)$ (also called a variation) over $S$ \cite{Griffiths1}. The moduli space of all (polarised) Hodge structures was first constructed by Griffiths, and forms an analytic, highly non-compact space. In \cite{Griffiths5} he presented the dream of (partially) compactifying this period domain. The answer was given by \cite{loghodgekatousui}, where a full compactification is constructed using \emph{logarithmic} Hodge structures.

Unfortunately, the question of how to obtain families of logarithmic Hodge structures from families of schemes is still open even in simple situations \cite[Remark 2.6.1]{loghodgeoverstandardlogpoint} such as non-compact varieties.

As a precursor to a future logarithmic mixed Hodge theory,
we study \emph{logarithmic Hodge numbers} $\dim_\CC H^q(\wedge^p \lkah{X})$, where $\lkah{X}$ is the sheaf of log differentials \cite[IV.1]{ogusloggeom}. Classically, the Hodge numbers of a smooth projective variety $X$ are defined by $h^{p, q}(X) \coloneqq \dim_\CC H^q(\wedge^p \kah{X})$. These are invariants of the Hodge structure of $X$. As the Hodge structure is a motivic invariant of $X$, the Hodge numbers only depend on the class $[X]$ in the \emph{Grothendieck ring} $\ordkvar{\CC}$, the free abelian group on varieties modulo the scissors relations $[X] = [Z] + [X \setminus Z]$ for a closed embedding $Z \to X$.

This in particular helps compute Hodge numbers $h^{p,q}(Y)$ for non-projective or singular varieties $Y$.

This paper defines the \emph{logarithmic Grothendieck ring} $\klvar{\CC}$. In Section~\ref{sec:presentation} we compute a presentation for this ring in terms of the classical ring $\ordkvar{\CC}$. This allows the computation of logarithmic motivic invariants as in the classical case.

In Section~\ref{sec:hodge} we show that the logarithmic Hodge numbers \emph{do not} give motivic invariants. We prove that the Euler characteristics $\chi(\wedge^p \lkah{X})$ are in fact motivic invariants.

\subsection{The log Grothendieck ring}

Write $\var{k}$ for the category of finite type, separated $k$-schemes and $\logvar{k}$ for the category of fine and saturated (f.s.) $k$-log schemes with underlying scheme in $\var{k}$.

\begin{definition}
The log Grothendieck ring
\[
\klvar{k}
\]
is the free abelian group generated by isomorphism classes $[X]$ for $X \in \logvar{k}$, modulo the \emph{strict scissor relations}
    \begin{equation}\label{intro:eqn:strictscissor}
    [X] = [Z] + [X \setminus Z]
    \end{equation}
    for all strict closed immersions $Z \to X$,
    and the \emph{log blowup relations}
    \begin{equation}\label{intro:eqn:logmodreln}
    [\tilde{X}] = [X]
    \end{equation}
    for all log blowups $\tilde{X} \to X$.
    The ring structure is induced by the product on $\logvar{k}$.

\end{definition}

Take $k = \CC$. We want to define and compute log Hodge numbers using $\klvar{\CC}$.

If $X$ is a smooth, projective variety, its Hodge numbers $h^{p, q}(X)$ are 
\[h^{p, q}(X) \coloneqq \dim_\CC H^q(\wedge^p \kah{X}) \qquad \in \NN.\]
The $e$-polynomial of $X$ is the generating function\footnote{Warning: there are two common sign conventions found in literature, namely $e(u,v)$ and $e(-u,-v)$.} 
\[e(X) \coloneqq \sum h^{p, q}(X) u^p v^q \qquad \in \ZZ[u, v].\]
Both factor through the Grothendieck ring
\begin{equation}\label{eqn:hodgeepolygrothringmap}
    h^{p, q} : \ordkvar{\CC} \to \NN, \qquad e : \ordkvar{\CC} \to \ZZ[u, v],
\end{equation}
and $e$ is even a ring homomorphism. For schemes $X$ which are not projective or smooth, Hodge numbers may be defined by the maps \eqref{eqn:hodgeepolygrothringmap} or using mixed Hodge structures.

For a log smooth, projective $X$ over $\CC$, define its log Hodge numbers 
\[
h^{p, q}_{\rm log}(X) \coloneqq \dim_\CC H^q(\wedge^p \lkah{X}) \qquad \in \NN
\]
and log $e$-polynomial
\[\Elog(X) \coloneqq \sum h^{p, q}_{\rm log}(X) u^p v^q \qquad \in \ZZ[u, v].\]

We want to define these log Hodge numbers and $e$-polynomials, in a motivic way, for log schemes which are not projective or log smooth. This is impossible. 

\begin{proposition}[Proposition~\ref{prop:nologhodge}]
\label{intro:prop:nologhodge}
    There is no map
    \[
    \phi: \logvar{\CC} \to \ZZ[u,v]
    \]
    that satisfies the strict scissor equation $\phi(X) = \phi(Z) + \phi(X \setminus Z)$ and that agrees with $\Elog$ on smooth, log smooth, projective log schemes.
\end{proposition}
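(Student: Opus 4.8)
\emph{Overall plan.} I would argue by contradiction: assume such a $\phi$ exists, and pin down its value on the standard log point $\point=(\Spec\CC,\NN)$ in two incompatible ways. Although $\point$ is not log smooth over the trivial log point — so $\Elog$ constrains it only indirectly — it appears as a stratum of log smooth projective log schemes, and the strict scissor relations applied to those schemes force a value for $\phi(\point)$. Computing this forced value using two different ambient log schemes will yield two different answers, the desired contradiction.

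\emph{Step 1: the trivially logged varieties.} First I would equip $\PP^1$, $\Aff^1\cong\PP^1\setminus\{\infty\}$, $\GG_m\cong\Aff^1\setminus\{0\}$ and points with their trivial log structures. Since $\PP^1$ and a point are smooth, log smooth and projective, $\phi$ must agree there with the ordinary $e$-polynomial, so $\phi(\PP^1)=1+uv$ and $\phi(\pt)=1$. The strict scissor relations $[\PP^1]=[\pt]+[\Aff^1]$ and $[\Aff^1]=[\pt]+[\GG_m]$ then force $\phi(\Aff^1)=uv$ and $\phi(\GG_m)=uv-1$.

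\emph{Step 2: two values of $\phi(\point)$.} Next I would take $X=\PP^1$ with the divisorial log structure along the single point $0$. This is smooth, log smooth and projective, and $\lkah{X}\cong\Omega_{\PP^1}(\log[0])\cong\OO_{\PP^1}(-1)$; since $\wedge^{\ge2}\lkah{X}=0$ and $\OO_{\PP^1}(-1)$ has no cohomology, the only nonzero log Hodge number is $h^{0,0}_{\rm log}=1$, so $\Elog(X)=1$. The inclusion $\{0\}\hookrightarrow X$ is a strict closed immersion with induced log structure $\point$, and $X\setminus\{0\}\cong\Aff^1$ with trivial log structure, so strict scissor gives $1=\phi(\point)+uv$, i.e. $\phi(\point)=1-uv$. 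Then I would take $Y=\PP^1$ with the divisorial log structure along $[0]+[\infty]$: again smooth, log smooth, projective, and $\lkah{Y}\cong\Omega_{\PP^1}(\log([0]+[\infty]))\cong\OO_{\PP^1}$, so $h^{0,0}_{\rm log}=h^{1,0}_{\rm log}=1$ and $\Elog(Y)=1+u$. Stratifying $Y$ by strict closed immersions — remove $\{0\}$ (induced log structure $\point$), leaving $\Aff^1$ with its standard toric log structure at $\infty$; then remove $\{\infty\}$ (again induced log structure $\point$), leaving $\GG_m$ with trivial log structure — strict scissor now gives $1+u=2\,\phi(\point)+\phi(\GG_m)=2\,\phi(\point)+uv-1$, i.e. $\phi(\point)=1+\tfrac12(u-uv)$.

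\emph{Conclusion and the hard part.} The two expressions $1-uv$ and $1+\tfrac12(u-uv)$ are distinct in $\ZZ[u,v]$ (their difference is $\tfrac12(u+uv)\neq0$), giving the contradiction. The part requiring genuine care is the behaviour of the \emph{induced} (strict) log structures on the strata: I would need to verify that restricting a divisorial log structure on $\PP^1$ to a point of its support gives the standard log point, and to the complement of the support gives the trivial log structure, so that the strata above are exactly as named, and that $(\PP^1,[0])$ and $(\PP^1,[0]+[\infty])$ really are smooth, log smooth and projective (reduced simple normal crossings divisors on a smooth projective curve). The sheaf computations $\lkah{X}\cong\OO(-1)$ and $\lkah{Y}\cong\OO$ follow from $\Omega_{\PP^1}\cong\OO(-2)$ and the residue exact sequence. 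Everything else is bookkeeping with the scissor relation; note the log blowup relations are never invoked, so the argument in fact rules out any $\phi$ satisfying only strict scissor.
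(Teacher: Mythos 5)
Your proposal is correct and its core is the same counterexample as the paper's: decomposing $\PP^1$ with its toric log structure and $(\PP^1)^\circ$ with trivial log structure into $\GG_m$ plus two points forces $2\,\phi(P) = 2 + u - uv$, which is already impossible in $\ZZ[u,v]$ by parity --- exactly Example~\ref{ex:Epolynotwelldef} and the paper's proof of Proposition~\ref{prop:nologhodge}. Your extra computation with $(\PP^1,[0])$ pinning down $\phi(P)=1-uv$ is a valid second route but is not needed; indeed your displayed value $1+\tfrac12(u-uv)$ does not lie in $\ZZ[u,v]$, so the toric-$\PP^1$ equation alone is the contradiction.
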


Example~\ref{ex:Epolynotwelldef} is an explicit counterexample. 

How can we fix this problem? Write 
\[\Elogbar_1(X) \coloneqq \Elog(X)(u, -1)\]
for the image of $\Elog(X)$ in the polynomial ring
\[\ZZ[u, v]/(v + 1) = \ZZ[u].\]
This is the generating function of the Euler characteristics of the wedge powers $\wedge^p \lkah{X}$
\[\Elogbar_1(X) = \sum \chi(\wedge^p \lkah{X}) u^p.\]

\begin{theorem}[Theorems~\ref{thm:E=t}, \ref{thm:E=tlogsmoothandreasgross}]
    There exists a ring homomorphism $\bar t_1 : \klvar{\CC} \to \ZZ[u]$ that satisfies $\bar t_1([X])=\Elogbar_1(X)$ for all $X$ with smooth, projective underlying scheme that are either
    \begin{enumerate}
        \item\label{eit:logsmooth} log smooth log scheme, or 
        \item\label{eit:constfree} constant free log schemes (see Definition \ref{def:constant log structure}). 
    \end{enumerate}
    
    In particular, the polynomial $\Elogbar_1$ is invariant under sufficiently fine log modifications $\tilde X \to X$ where $X$ belongs to \eqref{eit:logsmooth} or \eqref{eit:constfree} and $X, \tilde X$ have smooth, projective underlying scheme: 
    \[\Elogbar_1(\tilde X) = \Elogbar_1(X).\] 
\end{theorem}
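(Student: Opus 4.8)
\emph{The plan.} By the presentation computed in Section~\ref{sec:presentation} we may write $\klvar{\CC}\cong\ordkvar{\CC}[P]/(I)$, where $P=[(\Spec\CC,\NN)]$ is the class of the standard log point and $I$ is the single relation given there. To construct $\bar t_1$ it thus suffices to produce a ring homomorphism $\ordkvar{\CC}[P]\to\ZZ[u]$ killing $I$. I take the composite $\bar e_1\colon\ordkvar{\CC}\xrightarrow{\;e\;}\ZZ[u,v]\twoheadrightarrow\ZZ[u,v]/(v+1)=\ZZ[u]$, which is a ring homomorphism because $e$ is, together with $P\mapsto 1+u$; checking that $I\mapsto 0$ is a direct computation, the key input being $\bar e_1(\Aff^1)=-u$. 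The value $\bar t_1(P)=1+u$ is in fact forced by the theorem: $(\Spec\CC,\NN)$ is constant free with smooth projective underlying scheme, $\lkah{(\Spec\CC,\NN)}\cong\CC$ is one--dimensional, so $\Elogbar_1(\Spec\CC,\NN)=1+u$.

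\emph{The constant free case.} If $X$ has smooth projective underlying scheme $\underline X$ and constant free log structure of rank $r$, then $M_X\cong\OO_X^\times\oplus\NN^r$, and the relation $d\alpha(m)=\alpha(m)\,d\log m$ imposes nothing on the generators of the $\NN^r$--factor (which map to $0\in\OO_X$), so $\lkah{X}\cong\kah{X}\oplus\OO_X^{\oplus r}$ and hence $\wedge^p\lkah{X}\cong\bigoplus_{a+b=p}\wedge^a\kah{X}\otimes\OO_X^{\oplus\binom{r}{b}}$. Taking Euler characteristics and summing against $u^p$ gives
\[\Elogbar_1(X)=\Big(\sum_a\chi\big(\underline X,\wedge^a\kah{\underline X}\big)\,u^a\Big)(1+u)^r=\bar e_1(\underline X)\,(1+u)^r.\]
On the other hand $X=\underline X\times_{\Spec\CC}(\Spec\CC,\NN)^r$, so $[X]=[\underline X]\,P^r$ in $\klvar{\CC}$, whence $\bar t_1([X])=\bar e_1(\underline X)(1+u)^r=\Elogbar_1(X)$.

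\emph{The log smooth case.} Suppose first $X$ is smooth projective with the log structure of a \emph{simple} normal crossings divisor $D=\bigcup_{i=1}^r D_i$; write $D_I=\bigcap_{i\in I}D_i$ (so $D_\emptyset=X$), $a_I\colon D_I\hookrightarrow X$, and $D_I^{\circ}=D_I\setminus\bigcup_{j\notin I}D_j$, with all $D_I,D_I^{\circ}$ smooth and all $D_I$ projective. Deligne's residue weight filtration on the sheaf $\wedge^p\lkah{X}$ of log $p$--forms satisfies $\mathrm{Gr}^W_m\cong\bigoplus_{|I|=m}(a_I)_*\wedge^{p-m}\kah{D_I}$; since $(a_I)_*$ is exact and $\chi$ is additive along filtrations,
\[\chi\big(X,\wedge^p\lkah{X}\big)=\sum_{m\ge0}\ \sum_{|I|=m}\chi\big(D_I,\wedge^{p-m}\kah{D_I}\big).\]
Summing against $u^p$ and using $\bar e_1(D_I)=\sum_q\chi(D_I,\wedge^q\kah{D_I})u^q$ for the smooth projective $D_I$ gives $\Elogbar_1(X)=\sum_I u^{|I|}\bar e_1(D_I)$; substituting $[D_I]=\sum_{J\supseteq I}[D_J^{\circ}]$ and reindexing yields $\Elogbar_1(X)=\sum_J(1+u)^{|J|}\bar e_1(D_J^{\circ})$. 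On the Grothendieck side, the results of Section~\ref{sec:presentation} give $[X]=\sum_J P^{|J|}[D_J^{\circ}]$ (each stratum carries the constant free $\NN^{|J|}$ log structure), so $\bar t_1([X])=\sum_J(1+u)^{|J|}\bar e_1(D_J^{\circ})=\Elogbar_1(X)$. For a general log smooth $X$ with smooth projective underlying scheme the log structure is that of a possibly non--simple normal crossings divisor, and there is a log blowup $\pi\colon\tilde X\to X$, a composite of blowups of smooth strata, with $\tilde X$ smooth projective and simple normal crossings boundary. Since $\pi$ is log étale, $\wedge^p\lkah{\tilde X}=\pi^*\wedge^p\lkah{X}$; since $\pi$ is a log blowup and $\underline X$ is smooth, $R\pi_*\OO_{\tilde X}=\OO_X$ (toric--local model), so the projection formula gives $R\pi_*\wedge^p\lkah{\tilde X}=\wedge^p\lkah{X}$ and hence $\Elogbar_1(\tilde X)=\Elogbar_1(X)$. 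As $[\tilde X]=[X]$ in $\klvar{\CC}$ we conclude $\bar t_1([X])=\bar t_1([\tilde X])=\Elogbar_1(\tilde X)=\Elogbar_1(X)$. The ``in particular'' is then immediate: for a sufficiently fine log modification $\tilde X\to X$ with $X,\tilde X$ in \eqref{eit:logsmooth} or \eqref{eit:constfree} and smooth projective underlying schemes, $\bar t_1([X])=\bar t_1([\tilde X])$ forces $\Elogbar_1(X)=\Elogbar_1(\tilde X)$.

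\emph{Main obstacle.} The crux is the simple normal crossings step, and inside it the identity $[X]=\sum_J P^{|J|}[D_J^{\circ}]$: matching the combinatorics of the residue weight filtration with the motivic stratification of $X$ is exactly where the presentation of Section~\ref{sec:presentation} does its work, and one must also verify that the log structures restricted to the strata are genuinely constant free. The reduction from normal crossings to simple normal crossings --- and the log--étale together with $R\pi_*\OO_{\tilde X}=\OO_X$ invariance used to pass between the two --- is the remaining technical point, and it is also what upgrades the statement to the asserted log--modification invariance of $\Elogbar_1$.
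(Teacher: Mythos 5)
Your construction of $\bar{t}_1$ and your proofs of the two agreement statements are essentially correct, and the map you write down ($P \mapsto 1+u$, $[X^\circ]\mapsto e(X^\circ)(u,-1)$) is exactly the paper's $\tbar_1 = (e\circ\rho)|_{v=-1}$, since $\rho(P)=-[\GG_m]$ has $e$-polynomial $1-uv$. The constant free case is argued just as in the paper's Theorem~\ref{thm:E=t}: split $\lkah{X}\cong\kah{X^\circ}\oplus\OO_X^{\oplus r}$ and match $(1+u)^r$ against $[X]=[X^\circ]P^r$. For the log smooth case you take a genuinely different, though closely related, route: where the paper inducts on the number of components of the s.n.c.\ divisor using the Esnault--Viehweg short exact sequence $0\to\wedge^p\lkah{X'}\to\wedge^p\lkah{X}\to\wedge^{p-1}\lkah{\hat F}\to 0$ together with the identity $[F]=[\hat F]\cdot P$, you use the full residue weight filtration in one shot and match $\sum_I u^{|I|}\bar{e}_1(D_I)$ against the stratification $[X]=\sum_J P^{|J|}[D_J^\circ]$ via a binomial reindexing. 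Both arguments rest on the same residue theory and on Corollary~\ref{cor:locconstequivconst}; yours trades the induction for a closed-form computation and is arguably cleaner. Your reduction from normal crossings to simple normal crossings via $R\pi_*\OO_{\tilde X}=\OO_X$ and the projection formula is the same as the paper's Lemma~\ref{lem:pfwdE'polys} and Corollary~\ref{cor:E'polyslogmodlogsmooth}.

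The gap is in the ``in particular''. For $X$ log smooth your deduction is fine, since any log modification of a log smooth scheme is again log smooth and both sides are computed by $\bar{t}_1$ of the same class. But for $X$ constant free, a sufficiently fine log modification $\tilde X\to X$ is in general neither log smooth nor constant free (it is the pullback of a nontrivial subdivision to the point $(\pt,\NN^k)$), so the main statement does not apply to $\tilde X$; your added hypothesis that $\tilde X$ also lie in case (1) or (2) makes this half of the claim essentially vacuous, whereas the theorem only assumes $\tilde X^\circ$ smooth and projective. The paper proves this case by a separate argument (Proposition~\ref{prop:blowupEpolys}): one shows directly that $R\pi_*\OO_{\tilde X}=\OO_X$ for a log blowup of $(X,\NN^k)$ factoring through the barycentric blowup, which reduces by cohomology and base change to the vanishing $R\Gamma(B_0,\OO_{B_0})=k$ for the fiber over the origin of a toric blowup of $\Aff^k$ --- a genuinely nontrivial statement (Theorem~\ref{thm:rothemail} in the appendix) because $B_0$ need not be reduced. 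That ingredient is missing from your argument.
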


The log Hodge numbers $h^{p, q}_{\rm log}$ are not well-defined for log schemes which are not log smooth or projective, but their alternating sums $\sum (-1)^q h^{p, q}_{\mathrm log}$, which should be thought of as the holomorphic Euler characteristics $\chi(\wedge^p \lkah{X})$, are.  

We construct $\bar t_1$ using a presentation of $\klvar{\CC}$ as a $\ordkvar{\CC}$-algebra. Write $P \coloneqq (\Spec \CC, \CC^*\oplus \NN)$ for the \emph{standard log point} with rank-one log structure, and also $P \coloneqq [P] \in \klvar{\CC}$ for its class. 

\begin{theorem}[{Theorem \ref{thm:vogelpresentation}}]\label{inthm:vogelpresentation}
    The Grothendieck ring of log varieties is generated over the ordinary Grothendick ring by $P$, subject to a single relation
    \begin{equation}\label{eqn:inthmvogelpresentation}
    \klvar{\CC} \simeq \dfrac{\ordkvar{\CC}[P]}{P^2 + P [\GG_m]}.
    \end{equation}
\end{theorem}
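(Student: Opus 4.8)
The plan is to construct mutually inverse ring homomorphisms between $\klvar{\CC}$ and the quotient $R \coloneqq \ordkvar{\CC}[P]/(P^2 + P[\GG_m])$. The easy direction is the map $R \to \klvar{\CC}$: the inclusion $\var{\CC} \hookrightarrow \logvar{\CC}$ equipping a scheme with the trivial log structure sends strict closed immersions to strict closed immersions and is multiplicative, hence induces $\ordkvar{\CC} \to \klvar{\CC}$; then send the formal variable $P$ to the class $[P]$ of the standard log point. To see this is well-defined one must check the single relation $[P]^2 + [P][\GG_m] = 0$ holds in $\klvar{\CC}$. I expect this to follow from a log blowup computation: $[P]^2 = [P \times P]$, and $P\times P$ has a log blowup whose exceptional locus stratifies into pieces which, after applying the strict scissor relations, rearrange into $-[P][\GG_m]$. (Concretely, $P \times P = (\Spec\CC, \NN^2)$, and subdividing the cone $\NN^2$ by the diagonal ray produces a log modification whose two charts glue along a $\GG_m$; tracking classes gives the relation. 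This small identity is the one genuinely computational input and should be isolated as a lemma.)

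For the reverse map $\klvar{\CC} \to R$, I would send a log scheme $[X]$ to a class built from the strata of its log structure. The key structural input is that every object of $\logvar{\CC}$ admits, up to the imposed relations, a decomposition into strict locally closed pieces on each of which the log structure is "as simple as possible." The natural target for a single such piece with constant log structure of rank $r$ along a stratum $S$ is $[\underline S]\cdot(\text{something in }P)$; the defining relation $P^2 = -P[\GG_m]$ is exactly what makes higher rank contributions collapse, so that only $[\underline S]$ (rank $0$, trivial log structure) and $[\underline S]\cdot P$ (rank $\ge 1$) survive as independent classes. So the assignment should be: stratify $X$ by the rank of the characteristic monoid $\ol M_X$, send the open rank-zero stratum $U$ to $[\underline U] \in \ordkvar{\CC}\subseteq R$, and send each positive-rank stratum $S$ to $[\underline S]\cdot P$, summing the contributions. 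One must check (i) this is independent of the choice of stratification — reduces to additivity of $[\underline S] P$ under further strict subdivision, which is just scissor in $\ordkvar{\CC}$ multiplied by $P$; (ii) invariance under log blowups — a log blowup $\tilde X \to X$ is an isomorphism over the rank-zero locus and over rank-one loci changes only the combinatorics of the subdivision of a monoid, not the underlying reduced strata up to $\GG_m$-factors, and here the relation $P^2 + P[\GG_m]=0$ must absorb the discrepancy, mirroring the computation above; (iii) multiplicativity under products, which again uses $P^2 = -P[\GG_m]$ to handle the rank addition $\ol M_{X\times Y} = \ol M_X \oplus \ol M_Y$ along product strata.

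Finally I would verify the two composites are the identity. The composite $R \to \klvar{\CC} \to R$ is the identity on $\ordkvar{\CC}$ since the trivial-log-structure strata of a scheme are the scheme itself, and sends $P \mapsto [P] \mapsto P$ since the standard log point is a single rank-one stratum over $\Spec\CC$. For $\klvar{\CC}\to R \to \klvar{\CC}$, one applies the map to a general $[X]$, lands on $\sum_{\text{strata}} [\underline S]$ or $[\underline S][P]$, and must recognize this sum as equal to $[X]$ in $\klvar{\CC}$; this is precisely a statement that $X$ is built from its strata via strict scissor relations together with the fact that a constant rank-$r$ log structure on $S$ has the same class as $\underline S \times P$ — itself a consequence of log blowup invariance applied to subdivisions of $\NN^r$, again reducing to the core lemma.

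\textbf{Main obstacle.} The crux is establishing that a log scheme with \emph{non-constant} log structure of positive rank along a stratum still contributes only $[\underline S]\cdot P$ — equivalently, that after log blowups any such log structure can be reduced to a constant one, or that the non-constant cases are forced by the relations to agree with the constant one. Controlling how log blowups interact with the rank stratification, and verifying that the single relation $P^2 + P[\GG_m] = 0$ is \emph{exactly} enough (neither too weak to leave extra generators, nor requiring a second relation), is where the real work lies; this is likely handled by a careful induction on the maximal rank of $\ol M_X$, peeling off the deepest stratum, resolving its log structure by a toric subdivision, and invoking the inductive hypothesis on the complement.
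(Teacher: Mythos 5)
The overall architecture of your proposal --- a surjection from $\ordkvar{\CC}[P]$ realized by stratifying into constant pieces, the relation extracted from a blowup of the cone $\NN^2$, and an inverse map whose well-definedness encodes the claim that one relation generates the kernel --- is essentially the paper's (Example~\ref{ex:vogelrlns}, Proposition~\ref{prop:niceblowup}, Corollary~\ref{cor:psisurjective}). But there is one concrete error and one substantive missing idea.

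The error is in your reverse map: you send every positive-rank stratum $S$ to $[S^\circ]\cdot P$. It must be sent to $[S^\circ]\cdot P^{r}$ where $r$ is the rank, which in $R=\ordkvar{\CC}[P]/(P^2+P[\GG_m])$ equals $(-[\GG_m])^{r-1}[S^\circ]P$, not $[S^\circ]P$. With your assignment the map is neither blowup-invariant nor multiplicative, and the composite $R\to\klvar{\CC}\to R$ fails to be the identity: for instance $\Aff^2$ with its toric log structure would go to $[\GG_m]^2+(2[\GG_m]+1)P$ while its blowup at the origin goes to $[\GG_m]^2+(3[\GG_m]+2)P$, and the discrepancy $([\GG_m]+1)P=[\Aff^1]P$ is nonzero in $R$; with $P^r$ the discrepancy is $P^2+[\GG_m]P=0$. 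Similarly $P^2\mapsto[P\times P]\mapsto P\neq P^2$ under your formula. The true statement behind your heuristic is only that $R$ is free of rank two over $\ordkvar{\CC}$ with basis $1,P$; the coefficient of $P$ still remembers the rank through powers of $-[\GG_m]$.

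The missing idea is the actual mechanism by which the single relation absorbs \emph{all} log blowup relations, which you defer to ``the relation must absorb the discrepancy.'' This is the heart of the theorem. The paper's route is: (i) reduce an arbitrary log blowup relation to log blowups of constant free log schemes and then, by pulling back along a chart to $\Aff^r$, to smooth toric blowups of smooth toric varieties (Proposition~\ref{prop:basicblowuprelns}); (ii) show that for a smooth toric variety with fan $\Sigma$ the class modulo $P(P+[\GG_m])$ equals $[\GG_m]^n+(1-\chi_c(\Sigma))P[\GG_m]^{n-1}$ (Proposition~\ref{prop:toriclcf}). Since $\chi_c$ of the support of the fan is unchanged by subdivision, every toric blowup relation lies in the ideal $(P(P+[\GG_m]))$. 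Without some such computable subdivision invariant, your induction ``on the maximal rank, peeling off the deepest stratum'' has no way to certify that no further relations are needed, so the claim that the kernel is generated by the single element $P(P+[\GG_m])$ remains unproven.
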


The relation $P^2 + P[\GG_m] = 0$ is derived in Example~\ref{ex:vogelrlns}. Define two $\ordkvar{\CC}$-algebra homomorphisms
\[\tau, \rho : \klvar{\CC} \to \ordkvar{\CC}\]
by 
\[\tau(P) = 0, \qquad \rho(P) = -[\GG_m].\]
Then the map $\bar t_1$ from Theorem \ref{thm:E=t} is precisely the composite
\[\klvar{\CC} \overset{\rho}{\longrightarrow} \ordkvar{\CC} \overset{e}{\longrightarrow} \ZZ[u, v] \overset{v = -1}{\longrightarrow} \ZZ[u].\]

We construct another invariant of log schemes using the presentation \eqref{eqn:inthmvogelpresentation}. Write $\chi_c$ for the (compactly supported) Euler characteristic 
\[\chi_c(X) \coloneqq \sum (-1)^q \dim_\QQ H^q_c (X, \QQ)\]
and for its extension $\chi_c : \ordkvar{\CC} \to \ZZ$ to the Grothendieck group.

\begin{proposition}[{Proposition \ref{prop:logeulerchar}}]
    The composite 
    \[\chi^{\rm log} : \klvar{\CC} \overset{\tau}{\longrightarrow} \ordkvar{\CC} \overset{\chi}{\longrightarrow} \ZZ\]
    is the unique extension of the ring homomorphism $\chi_c$ to $\klvar{\CC}$. 
\end{proposition}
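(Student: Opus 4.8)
The plan is to reduce everything to the presentation $\klvar{\CC}\simeq\ordkvar{\CC}[P]/(P^2+P[\GG_m])$ of Theorem~\ref{inthm:vogelpresentation}. First I would pin down the meaning of ``extension of $\chi_c$'': write $\iota\colon\ordkvar{\CC}\to\klvar{\CC}$ for the structure map, which sends a class $[X]$ to the class of $X$ equipped with the trivial log structure; an extension of $\chi_c$ is then a ring homomorphism $\psi\colon\klvar{\CC}\to\ZZ$ with $\psi\circ\iota=\chi_c$. Note that this structure map is exactly the one making $\klvar{\CC}$ an $\ordkvar{\CC}$-algebra in the presentation above.

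For existence, I would observe that $\chi^{\rm log}=\chi\circ\tau$ is a composite of ring homomorphisms, hence itself a ring homomorphism, and in particular it automatically respects the strict scissor and log blowup relations. Since $\tau$ is by definition an $\ordkvar{\CC}$-algebra homomorphism we have $\tau\circ\iota=\mathrm{id}$, so $\chi^{\rm log}\circ\iota=\chi$, the Euler characteristic on $\ordkvar{\CC}$. Using the classical identity $\chi=\chi_c$ on complex varieties (both are motivic and agree on smooth projectives, or invoke Poincar\'e--Verdier duality) gives $\chi^{\rm log}\circ\iota=\chi_c$, so $\chi^{\rm log}$ is an extension of $\chi_c$.

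For uniqueness, by Theorem~\ref{inthm:vogelpresentation} the ring $\klvar{\CC}$ is free of rank two over $\ordkvar{\CC}$ with basis $\{1,P\}$ (the defining relation being monic of degree $2$ in $P$), so any ring homomorphism $\psi$ with $\psi\circ\iota=\chi_c$ is $\ordkvar{\CC}$-linear and therefore determined by the single integer $n\coloneqq\psi(P)$; the only constraint is $\psi(P^2+P[\GG_m])=0$, i.e.\ $n^2+n\,\chi_c([\GG_m])=0$. Now $\chi_c([\GG_m])=\chi_c(\Aff^1)-\chi_c(\pt)=1-1=0$, so the constraint becomes $n^2=0$, and since $\ZZ$ is an integral domain this forces $n=0=\chi^{\rm log}(P)$. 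Hence $\psi=\chi^{\rm log}$.

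The only substantive input is the presentation theorem, which we are taking as given; beyond that the proof is just the computation $\chi_c([\GG_m])=0$ together with $\ZZ$ being a domain. The one place to be slightly careful — and the closest thing to an obstacle — is the identification $\chi=\chi_c$ on $\ordkvar{\CC}$, which is what guarantees that the composite restricts to $\chi_c$ rather than to some other additive invariant, plus being explicit that the relevant map $\ordkvar{\CC}\to\klvar{\CC}$ is the trivial-log-structure inclusion; neither point presents a real difficulty.
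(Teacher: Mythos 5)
Your proof is correct and takes essentially the same route as the paper: reduce to the presentation of Theorem~\ref{thm:vogelpresentation}, note that an extension is determined by its value $n$ on $P$ subject to $n(n+\chi_c(\GG_m))=0$, and use $\chi_c(\GG_m)=0$ to force $n=0$. Your added care about $\chi=\chi_c$ on $\ordkvar{\CC}$ and about $\ZZ$ being a domain makes explicit two points the paper leaves implicit.
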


We compute the class $[X] \in \klvar{k}$ of any toric variety $X$, which gives all the above invariants. 

\begin{proposition}[{Proposition \ref{prop:toricclass}}]
    Let $\Sigma \subseteq \RR^n$ be a fan and $X$ the associated $n$-dimensional toric variety. 
    Its class in $\klvar{k}$ is 
    \[
    \bra{X} = \bra{\GG_m}^n + (1-\chi_c(\Sigma)) \cdot P \bra{\GG_m}^{n-1}, 
    \]
    where $\chi_c(\cdot)$ is the Euler characteristic with compact support.
\end{proposition}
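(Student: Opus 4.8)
The plan is to stratify the toric variety by its torus orbits, compute the class of each orbit in $\klvar{k}$, and assemble the answer using the relation $P^2 + P[\GG_m]=0$ of Example~\ref{ex:vogelrlns}. Here $X=X(\Sigma)$ carries its canonical toric log structure (the divisorial one along the toric boundary $X\setminus\GG_m^n$), and I read ``$\chi_c(\Sigma)$'' as the compactly supported Euler characteristic of the support $|\Sigma|=\bigcup_{\sigma\in\Sigma}\sigma\subseteq\RR^n$; we may assume $n\ge 1$, the case $n=0$ being the point.

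First I would reduce to the case that $\Sigma$ is a smooth fan. By toric resolution of singularities there is a smooth refinement $\Sigma'$ of $\Sigma$ obtained from $\Sigma$ by a finite sequence of star subdivisions. Each star subdivision is the blowup of a torus-invariant subscheme, hence a log blowup, so $X(\Sigma')\to X(\Sigma)$ is a composite of log blowups and $[X(\Sigma')]=[X(\Sigma)]$ by iterating \eqref{intro:eqn:logmodreln}. A refinement does not change the support, so $|\Sigma'|=|\Sigma|$ and $\chi_c(\Sigma')=\chi_c(\Sigma)$; thus it suffices to treat smooth $\Sigma$.

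Now let $\Sigma$ be smooth. The torus orbits $O_\sigma\cong\GG_m^{n-\dim\sigma}$, $\sigma\in\Sigma$, are the strata of a stratification of $X$ by locally closed torus-invariant subschemes, and peeling off orbit closures one at a time exhibits $[X]$, via the iterated strict scissor relation \eqref{intro:eqn:strictscissor}, as $\sum_{\sigma\in\Sigma}[O_\sigma]$, where $O_\sigma$ carries the log structure restricted from $X$ (at each stage the restriction of the canonical log structure to a torus-invariant closed subscheme is again canonical, so the immersion is strict). In a smooth affine chart $U_\sigma\cong\Aff^{\dim\sigma}\times\GG_m^{n-\dim\sigma}$ with log structure pulled back from the toric boundary of $\Aff^{\dim\sigma}$, the orbit $O_\sigma$ is the coordinate subspace $\{x_1=\cdots=x_{\dim\sigma}=0\}$ and its induced log structure is the constant one $\OO^*\oplus\NN^{\dim\sigma}$, i.e.\ $O_\sigma\cong\GG_m^{n-\dim\sigma}\times P^{\times\dim\sigma}$ as log schemes, so $[O_\sigma]=[\GG_m]^{n-\dim\sigma}P^{\dim\sigma}$. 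From $P^2=-P[\GG_m]$ one gets $P^j=(-1)^{j-1}P[\GG_m]^{j-1}$ for $j\ge1$, hence $[\GG_m]^{n-j}P^j=(-1)^{j-1}P[\GG_m]^{n-1}$ for $j\ge1$ while the unique $0$-dimensional cone contributes $[\GG_m]^n$; therefore
\[
[X]=[\GG_m]^n+\Bigl(\,1-\sum_{\sigma\in\Sigma}(-1)^{\dim\sigma}\Bigr)P[\GG_m]^{n-1}.
\]
Finally, the relative interiors $\mathrm{relint}(\sigma)$ partition $|\Sigma|$ into locally closed pieces, each homeomorphic to $\RR^{\dim\sigma}$ and so with compactly supported Euler characteristic $(-1)^{\dim\sigma}$; since $\chi_c$ is additive over such partitions, $\sum_{\sigma\in\Sigma}(-1)^{\dim\sigma}=\chi_c(|\Sigma|)$, which yields the stated formula.

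The main thing to be careful about is the reduction step together with the orbit bookkeeping: one must check that ``log blowup'' as used here is general enough to cover the toric modifications induced by (projective, smooth) refinements of fans — equivalently that iterated star subdivisions qualify — and that each stage of the orbit stratification is genuinely a strict closed immersion in $\logvar{k}$ with the expected induced log structure on the open complement. If one wishes to avoid resolution, the same argument runs directly for arbitrary $\Sigma$ once one knows that a standard log point $(\Spec k,Q)$ with $Q$ a sharp fs monoid of rank $d$ has class $P^d$ in $\klvar{k}$; this can be proved by induction on $d$ by realizing $(\Spec k,Q)$ as the minimal orbit of $\Spec k[Q]$ and comparing its orbit stratification with that of a smooth toric resolution, but it then also requires identifying the induced log structure on each orbit with a constant log structure, which uses the elementary splitting $S\cong S^*\oplus(S/S^*)$ of an fs monoid.
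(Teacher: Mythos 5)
Your proposal is correct and follows essentially the same route as the paper: the smooth case is handled by stratifying into torus orbits $O_\sigma\cong\GG_m^{n-\dim\sigma}$ with constant log structure of rank $\dim\sigma$ and collapsing via $P^2=-P[\GG_m]$ (this is the paper's Proposition~\ref{prop:toriclcf}), and the general case reduces to it by log blowup invariance of both sides under a smooth refinement of the fan. The only cosmetic difference is that you perform the resolution step first, whereas the paper proves the smooth case in $\ordkvar{k}[P]$ modulo the relation and then invokes invariance.
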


The $\Elogbar_1$-polynomial and log Euler characteristic $\chi^{\rm log}$ of such a toric variety $X$ are then 
\[
\Elogbar_1(X) = \chi_c(\Sigma) \cdot e(\GG_m)^n = \chi_c(\Sigma) \cdot (-u - 1)^n
\]
\[
\chi^{\rm log}(X) = \chi_c(\GG_m)^n = 0.
\]

\subsubsection*{Acknowledgments}

We are grateful to
Yagna Dutta,
Tommaso de Fernex,
Márton Hablicsek,
Karl Schwede, 
Y.P. Lee, and
Alexander Zotine
for conversations on the present paper. 
Appendix \ref{appendix:rothemail} is due to Mike Roth, who generously allowed the authors to include his arguments.

During the writing of this article, the NSF RTG grant \#1840190 supported L.H., the NWO grant VI.Vidi.193.006 supported L.H., D.H., and P.S., and the ERC Consolidator Grant FourSurf 101087365 supported P.S. A.G. has received funding from the Deutsche Forschungsgemeinschaft (DFG, German Research Foundation) TRR 326 \emph{Geometry and Arithmetic of Uniformized Structures}, project number 444845124, and from the Marie-Sk\l{}odowska-Curie-Stipendium Hessen (as part of the HESSEN HORIZON initiative).

\subsection{Conventions}

All our log schemes and log algebraic stacks are f.s.~ (fine and saturated, \cite[Chapter 2.1]{ogusloggeom}).

We work over a field $k$ of any characteristic, not necessarily algebraically closed, except for Section \ref{sec:hodge} where we take $k = \CC$.

We write $\logvar{k}$ for the category of finite type, separated log schemes over $\Spec k$, where $\Spec k$ has the trivial log structure. All fiber squares in this paper are both log fiber squares and scheme-theoretic fiber squares.

An \emph{s.n.c.~pair} is a pair $(X, D)$ of a \emph{smooth} scheme $X$ and a strict normal crossings divisor $D$, with its natural divisorial log structure. We endow a toric variety $T$ like $\Aff^n, \PP^n$ with its natural toric log structure by default, which comes from the toric divisors.

A \emph{log blowup} $\tilde{X} \to X$ of a log scheme $X$ in a log ideal $I \subseteq M_X$ is the universal log scheme over $X$ such that the pullback of $I$ is invertible, see \cite[Section~III.2.6]{ogusloggeom}. 
For an s.n.c.~pair $(X,D)$, this is an iterated blowup $\tilde{X} \to X$ in boundary strata. For a toric variety, this is a toric blowup, which can be described by subdivision of the fan. Log blowups are always proper, log \'etale log monomorphisms.

A \emph{log modification} $\tilde{X} \to X$ is a map such that there is some log blowup $\hat{X} \to \tilde{X}$ such that the composite $\hat{X} \to \tilde X \to X$ is also a log blowup.

\section{The log Grothendieck ring and its presentation}
\label{sec:presentation}
In this section we will prove the following simple presentation of the log Grothendieck ring.

\begin{theorem}\label{thm:vogelpresentation}

The log Grothendieck ring is generated by $P$ with one relation: 
\begin{equation}\label{eqn:vogelpresentation}
\klvar{k} = \dfrac{\ordkvar{k}[P]}{P^2 + P [\GG_{m}]}. 
\end{equation}
\end{theorem}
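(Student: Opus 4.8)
The strategy is to construct mutually inverse maps between $\klvar{k}$ and the quotient ring $R \coloneqq \ordkvar{k}[P]/(P^2 + P[\GG_m])$. The easy direction is the map $R \to \klvar{k}$: the inclusion $\var{k} \hookrightarrow \logvar{k}$ sending a scheme to itself with trivial log structure induces a ring map $\ordkvar{k} \to \klvar{k}$ (strict closed immersions of schemes with trivial log structure are just closed immersions, and there are no nontrivial log blowups), and we send the formal variable $P$ to the class $[P]$ of the standard log point. For this to be well-defined we must check the relation $[P]^2 + [P][\GG_m] = 0$ holds in $\klvar{k}$; this is asserted to be done in Example~\ref{ex:vogelrlns}, so I would simply cite that computation. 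The real content is showing this map is an isomorphism, equivalently constructing a ring homomorphism $\klvar{k} \to R$ that inverts it.

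To build $\klvar{k} \to R$, I would produce, for every $X \in \logvar{k}$, a class $\langle X\rangle \in R$, and check it respects the strict scissor relations, the log blowup relations, multiplicativity, and that $\langle P\rangle = P$ and $\langle Y \rangle = [Y]$ for $Y$ with trivial log structure. The geometric idea: stratify $X$ by the rank of the characteristic monoid $\ol M_X$ (the open locus where the log structure is trivial, then the rank-one locus, etc.). On a locally closed stratum $S$ where $\ol M_X$ has constant rank $r$, one expects $S$ to be fibered over its underlying scheme $\ul S$ in a way that contributes $[\ul S]\cdot(\text{something in } P, [\GG_m])$. The key local computation is that the standard log point $P$ has class $P$, that $P^n$ (the $n$-fold self-product, a point with rank-$n$ log structure) collapses via the relation — indeed $P^2 = -P[\GG_m]$, so inductively $P^n = (-1)^{n-1}P[\GG_m]^{n-1}$ — and more generally that adding a rank-one "monodromy-free" direction multiplies the class by $[\GG_m]$ while passing to the boundary. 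This should let one reduce any class to a $\ZZ$-linear combination of $[Y]$ and $[Y]\cdot P$ for schemes $Y$, matching the normal form of elements of $R$ (every element of $R$ is uniquely $a + bP$ with $a,b \in \ordkvar{k}$, since $P^2 \equiv -P[\GG_m]$).

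The main obstacle is proving that the assignment $X \mapsto \langle X\rangle$ is well-defined on isomorphism classes and, crucially, that it is invariant under log blowups — a log blowup $\tilde X \to X$ changes the combinatorics of the log strata substantially (e.g. subdividing a cone replaces one stratum by several), so one must verify that the alternating sum of the stratum-contributions is unchanged. The cleanest route is probably to reduce to the case where $X$ is log smooth with an s.n.c. (or fine Zariski, even toric-chart-local) structure, handle the subdivision combinatorially there using the relation $P^2 + P[\GG_m]=0$ repeatedly, and invoke a dévissage/resolution argument (log blowups and strict modifications generate enough moves) to extend to all of $\logvar{k}$. One should also double-check the two candidate maps compose to the identity in both directions: $R \to \klvar{k} \to R$ sends $P \mapsto [P] \mapsto \langle P\rangle = P$ and is the identity on $\ordkvar{k}$, hence is $\mathrm{id}_R$ since $R$ is generated by these; conversely $\klvar{k}\to R \to \klvar{k}$ must be shown to be the identity, which follows once one knows $\klvar{k}$ is generated over (the image of) $\ordkvar{k}$ by $[P]$ — this generation statement is really the heart of the theorem and is presumably where the stratification argument above is doing its work.
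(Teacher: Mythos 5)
Your overall architecture matches the paper's: both establish the relation via the blowup of $\Aff^2$ (Example~\ref{ex:vogelrlns}), both prove the surjection $\ordkvar{k}[P] \twoheadrightarrow \klvar{k}$ by stratifying into pieces with (free) constant log structure after a preliminary log blowup, and both reduce the kernel computation to invariance under toric subdivisions. Your framing as ``construct an inverse map $\langle-\rangle : \klvar{k} \to R$'' is equivalent to the paper's ``compute $\ker\Psi$,'' since well-definedness of $\langle-\rangle$ on the blowup relations is exactly the statement that $\ker\Psi$ lies in the ideal $(P^2+P[\GG_m])$.

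However, the step you describe as ``handle the subdivision combinatorially there using the relation $P^2 + P[\GG_m]=0$ repeatedly'' is where essentially all the content lives, and you have not supplied the idea that makes it work. A log blowup is an arbitrary (iterated star) subdivision of a fan, and it is not evident that repeatedly applying the single relation reconciles the stratum counts of $\Sigma$ and an arbitrary refinement $\tilde\Sigma$. The paper's mechanism (Proposition~\ref{prop:toriclcf}) is that modulo $P(P+[\GG_m])$ every term $[\GG_m]^{n-r}P^r$ with $r\geq 1$ collapses to $(-1)^{r-1}[\GG_m]^{n-1}P$, so the class of a smooth toric variety becomes $[\GG_m]^n + \chi_c(\Sigma\setminus 0)\,P[\GG_m]^{n-1}$ --- and $\chi_c(\Sigma)$ depends only on the support $|\Sigma|$, which subdivision does not change. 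Without this (or an equivalent) observation, your plan stalls at its hardest point. Two smaller issues: (i) your stratification by the rank of $\bar M_X$ presupposes free stalks, whereas general fs log schemes have arbitrary sharp fs monoids as stalks, so the preliminary log blowup of Proposition~\ref{prop:niceblowup} is needed \emph{before} the rank stratification yields classes of the form $[Y]P^r$; (ii) even on a constant-rank stratum the extension $\OO_X^* \to M_X \to \bar M_X$ need not split globally, so one must further stratify to trivialize the associated line bundles (Lemma~\ref{lem:noethdecomposelinebundles}) before concluding the stratum is a product with a constant log point.
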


We first show the relation $P^2 + P [\GG_{m}]$ holds.

\begin{example}
\label{ex:vogelrlns}
We take $X$ to be $\bb A^2$ with its toric log structure given by the toric boundary, and $\tilde{X} \to X$ the blowup in $(0,0)$. If we write $P$ for the class in $\klvar{k}$ of the origin with point, then we find $[X] = [\bb G_m]^2 + 2 [\bb G_m] \cdot P + P^2$, and $[\tilde{X}] = [\bb G_m]^2 + 3 [\bb G_m] \cdot P + 2 P^2$, as illustrated in Figure~\ref{fig:vogelrelns}. The equality $[X] = [\tilde{X}]$ then gives rise to the relation \begin{equation}
    P([\bb G_m] + P) = 0
\end{equation}
in $\klvar{k}$.

\end{example}

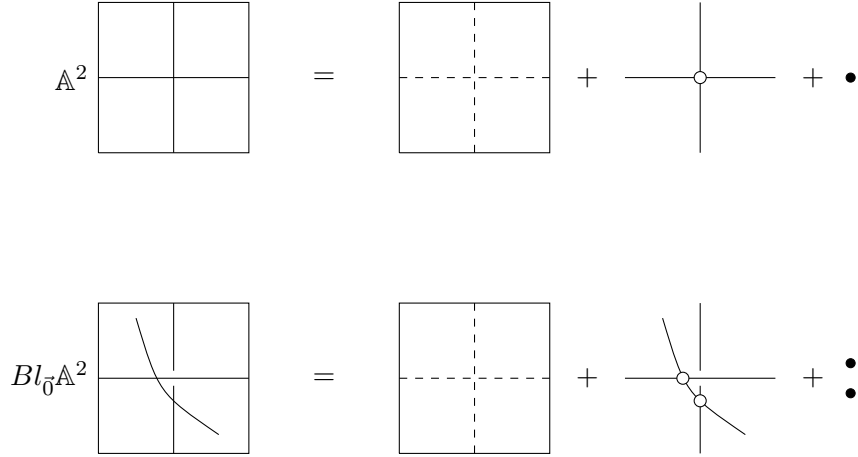
\begin{figure}
    \centering
    \begin{tikzpicture}
    \begin{scope}[shift = {(0,4)}]
    
    \node[left] at (0, 1){$\Aff^2$};
    
    \node at (3, 1){$=$};
    \node at (6.5, 1){$+$};
    \node at (9.5, 1){$+$};

    \begin{scope}
        \draw (0, 0) to (2, 0) to (2, 2) to (0, 2) to (0, 0);
        \draw (1, 0) to (1, 2);
        \draw (0, 1) to (2, 1);
    \end{scope}
    \begin{scope}[shift = {(4, 0)}]
        \draw (0, 0) to (2, 0) to (2, 2) to (0, 2) to (0, 0);
        \draw[dashed] (1, 0) to (1, 2);
        \draw[dashed] (0, 1) to (2, 1);
    \end{scope}
    \begin{scope}[shift = {(7, 0)}]
        \draw (1, 0) to (1, 2);
        \draw (0, 1) to (2, 1);
        \filldraw[fill=white, draw=black] (1, 1) circle (.08);
    \end{scope}
    \begin{scope}[shift = {(9, 0)}]
        \filldraw[black] (1, 1) circle (.06);
    \end{scope}
    \end{scope}
    \begin{scope}[shift = {(0, 0)}]
        
        \node[left] at (0, 1){$Bl_{\vec 0}\Aff^2$};
        
        \node at (3, 1){$=$};
        \node at (6.5, 1){$+$};
        \node at (9.5, 1){$+$};
        
        \draw (1, 0) to (1, 2);
        \draw[line width = 2mm, color=white] (0, 1) to (2, 1);
        \draw (0, 1) to (2, 1);
        \draw (0, 0) to (2, 0) to (2, 2) to (0, 2) to (0, 0);
        \draw (.5, 1.8) .. controls (.8, .8) .. (1.6, .25);

        \begin{scope}[shift = {(4, 0)}]
        \draw (0, 0) to (2, 0) to (2, 2) to (0, 2) to (0, 0);
        \draw[dashed] (1, 0) to (1, 2);
        \draw[dashed] (0, 1) to (2, 1);
        \end{scope}
        
        \begin{scope}[shift = {(7, 0)}]\draw (1, 0) to (1, 2);
        \draw[line width = 2mm, color=white] (0, 1) to (2, 1);
        \draw (0, 1) to (2, 1);
        \draw (.5, 1.8) .. controls (.8, .8) .. (1.6, .25);
        \filldraw[fill=white, draw=black] (.77, 1) circle (.08);
        \filldraw[fill=white, draw=black] (1, .7) circle (.08);
        \end{scope}
        \begin{scope}[shift = {(9, 0)}]
            \filldraw[black] (1, 1.2) circle (.06);
            \filldraw[black] (1, .8) circle (.06);
        \end{scope}
    \end{scope}
    \end{tikzpicture}
    \caption{Decompose the plane $\Aff^2$ and its blowup $Bl_{\vec 0} \Aff^2$ at the origin into locally closed strata on which the log structure is constant. Obtain $[\Aff^2] = [\GG_m]^2 + 2 [\GG_m] \cdot P + P^2$ and $[Bl_{\vec 0} \Aff^2] = [\GG_m]^2 + 3 [\GG_m] \cdot P + 2 P^2$. }
    \label{fig:vogelrelns}
\end{figure}

Next we will prove $\klvar{k}$ is generated by log schemes with constant free log structure.

\begin{definition}
\label{def:constant log structure}
A log scheme $X$ is \emph{constant} if its log structure decomposes as a direct sum
\[M_X = \OO_X^* \oplus \underline{Q}\]
for a sharp f.s.\ monoid $Q$. Write $(X, Q)$ for the resulting constant log scheme.

A log scheme $X$ is \emph{constant free} if it is constant $X = (X, Q)$ and $Q \simeq \NN^r$ is free.  
Say $X$ is \emph{locally constant free} if it admits a locally closed stratification $X = \bigsqcup X_\sigma$ with $X_\sigma$ constant free. 

We write $\logvar{k}^\mathsf{c}, \logvar{k}^{\mathsf{cf}}$ and $\logvar{k}^\lcf$ for the full subcategories of $\logvar{k}$ consisting of constant, constant free, and locally constant free log schemes.
\end{definition}

Constant log schemes are exactly the log schemes that admit strict maps to constant points $(\pt,Q)$. To prove that the log Grothendieck ring is generated by $P$ over $\ordkvar{k}$, we first need the following lemma.

\begin{lemma}\label{lem:noethdecomposelinebundles}
    Let $(L_i)$ be a finite set of line bundles on a noetherian scheme $X$. There is a locally closed stratification $X = \bigsqcup X_i$ on which all the $L_i$'s are trivial. 
\end{lemma}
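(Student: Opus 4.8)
The plan is to reduce to the affine case and then argue by Noetherian induction on closed subsets. First I would observe that the statement is local: if $X = \bigcup_j U_j$ is a finite open cover and each $U_j$ admits a finite locally closed stratification trivializing the restrictions $L_i|_{U_j}$, the intersections of these strata give such a stratification of $X$. By quasi-compactness of the Noetherian $X$, it therefore suffices to treat the case $X$ affine, say $X = \Spec A$.

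Second, in the affine case each $L_i$ is a finitely generated projective $A$-module of rank $\le 1$ (actually locally free of rank $1$ on its support, and possibly supported on a union of connected components). I would produce, for a single line bundle $L$, a finite set of distinguished opens $D(f_\alpha)$ covering $X$ on which $L$ trivializes — this exists because a line bundle is locally trivial for the Zariski topology. Then the complement $X \setminus D(f_1)$ is a proper closed subscheme (assuming $D(f_1) \neq \emptyset$; handle the trivial case separately), and one iterates. Concretely, set $Z_1 = X \setminus D(f_1)$, which is closed, and on the open $D(f_1)$ the bundle $L$ is trivial; then restrict attention to $Z_1$ with the remaining cover and repeat. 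Since $X$ is Noetherian this terminates, yielding a finite locally closed stratification of $X$ trivializing $L$.

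Third, to handle all the $L_i$ simultaneously I would apply the previous step to $L_1$, obtaining a finite locally closed stratification $X = \bigsqcup_k S_k$ on which $L_1$ is trivial; then apply the argument to $L_2|_{S_k}$ for each $k$ (each $S_k$ is again a Noetherian scheme), refining the stratification, and continue through $L_r$. A finite composition of finite refinements is still a finite locally closed stratification, and on the final pieces every $L_i$ is trivial. Alternatively, and perhaps more cleanly, one can run the Noetherian induction once on all the $L_i$ at once: over a dense open $U \subseteq X$ (we may take $X$ affine and even reduced and irreducible here, working one irreducible component at a time, or just pass to a distinguished open) all the $L_i$ become simultaneously trivial, the complement $X \setminus U$ is a proper closed subset, and the inductive hypothesis applied to $X \setminus U$ finishes the argument.

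The main obstacle — really the only subtlety — is purely bookkeeping: making sure ``locally closed stratification'' is used consistently (a finite partition into locally closed subschemes, with no requirement that closures of strata be unions of strata), and checking that the refinement/induction process is genuinely finite. The geometric input, namely that a line bundle on an affine (or any) scheme is Zariski-locally trivial and that a Noetherian scheme is quasi-compact with descending chain condition on closed subsets, is entirely standard, so no hard estimates or constructions are needed.
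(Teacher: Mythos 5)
The paper offers no proof of this lemma at all (it is stated and used without argument), so there is nothing to compare against; your Noetherian-induction argument --- every line bundle is Zariski-locally trivial, so all the $L_i$ are simultaneously trivial on some nonempty open of any nonempty closed subset, and the descending chain condition makes the resulting stratification finite --- is exactly the standard proof one would supply, and it is correct. The only slip is the parenthetical claim that each $L_i$ is projective of rank $\le 1$ with possibly smaller support: a line bundle is by definition invertible, i.e.\ locally free of rank exactly $1$ everywhere, which only simplifies your argument.
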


\begin{corollary}
\label{cor:locconstequivconst}
    If $X$ is a locally constant log scheme with stalks $\bar M_{X, \bar x} \simeq N$, write $X' = (X, N)$ for the same scheme with constant log structure. Then $X$ admits a locally closed stratification $X = \bigsqcup X_i$ with $X_i \cong (X_i, N)$, and we have an equality of classes $[X] = [X']$ in $\klvar{k}$.
\end{corollary}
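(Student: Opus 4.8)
The plan is to first produce a locally closed stratification $X = \bigsqcup_i X_i$ by strict locally closed immersions on which the log structure is constant, and then read off the equality of classes. Once such a stratification exists we are done: since $X' = (X, N)$ has the same underlying scheme as $X$, the subschemes $X_i$ with their constant log structures $(X_i, N)$ are strict locally closed subschemes of $X'$ as well; ordering the strata so that each is closed in the complement of the previous ones and applying the strict scissor relation \eqref{intro:eqn:strictscissor} repeatedly yields
\[
[X] = \sum_i [X_i] = \sum_i [(X_i, N)] = [X'] \qquad \text{in } \klvar{k},
\]
using $X_i \cong (X_i, N)$ in the middle equality. So the whole task is constructing the stratification, and Lemma~\ref{lem:noethdecomposelinebundles} is the engine for it.

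First I would reduce to the case that the ghost sheaf $\bar M_X$ is the \emph{constant} sheaf $\underline N$ rather than merely locally constant: stratifying $X$ into integral locally closed subschemes, the locally constant sheaf $\bar M_X$ restricts to a constant sheaf on each piece (and restriction to a subscheme is always strict), so we may assume $\bar M_X = \underline N$.

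Now $N$ is sharp and fine and saturated, so $N^{\mathrm{gp}}$ is torsion-free, say $N^{\mathrm{gp}} \cong \ZZ^r$, and since the log structure is integral, $M_X$ is recovered as the preimage of $\underline N \subseteq \underline{\ZZ^r} = \bar M_X^{\mathrm{gp}}$ in $M_X^{\mathrm{gp}}$. The group sheaf $M_X^{\mathrm{gp}}$ is an extension
\[
1 \longrightarrow \OO_X^* \longrightarrow M_X^{\mathrm{gp}} \longrightarrow \underline{\ZZ^r} \longrightarrow 1,
\]
classified by $r$ line bundles $L_1, \dots, L_r$ on $X$. By Lemma~\ref{lem:noethdecomposelinebundles} there is a locally closed stratification $X = \bigsqcup_i X_i$ on which all the $L_j$ are trivial; on each $X_i$ the extension splits, and the resulting isomorphism $M_X^{\mathrm{gp}}|_{X_i} \cong \OO_{X_i}^* \oplus \underline{\ZZ^r}$ identifies $M_X|_{X_i}$ with $\OO_{X_i}^* \oplus \underline N$, that is, $X_i \cong (X_i, N)$, with $X_i \hookrightarrow X$ strict. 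Feeding this into the first paragraph completes the argument.

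The individual steps are routine; the points most worth care are the initial reduction to a \emph{constant} (not merely locally constant) ghost sheaf, and the observation --- which uses sharpness of $N$ to rule out torsion in $N^{\mathrm{gp}}$ --- that the obstruction to splitting the log structure is carried purely by line bundles on $X$, so that Lemma~\ref{lem:noethdecomposelinebundles} genuinely applies. Keeping every immersion in sight strict throughout is what makes the scissor relation \eqref{intro:eqn:strictscissor} legitimately applicable.
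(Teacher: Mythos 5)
Your argument is correct and is exactly the intended one: the paper gives no explicit proof, but places Lemma~\ref{lem:noethdecomposelinebundles} immediately before the corollary precisely so that the extension $1 \to \OO_X^* \to M_X^{\rm gp} \to \underline{\ZZ}^r \to 1$ can be split stratum by stratum after trivializing the $r$ classifying line bundles, and your use of sharpness and saturation to get $N^{\rm gp} \cong \ZZ^r$ and of integrality to recover $M_X$ as the preimage of $\underline{N}$ inside $M_X^{\rm gp}$ supplies the right details. The one step to phrase more carefully is the initial reduction: an (\'etale-)locally constant sheaf on an integral scheme can still have monodromy, so you should either read ``locally constant'' Zariski-locally (which is what the application in Proposition~\ref{prop:niceblowup} provides, since charts have been arranged Zariski-locally there, and a Zariski-locally constant sheaf on an irreducible scheme is constant) or refine the stratification using the opens on which $\bar M_X$ is actually constant, rather than appealing to integrality of the strata alone.
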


This says $[X]$ only depends on $X^\circ$ and the sheaf $\bar M_X$.

\begin{proposition}
\label{prop:niceblowup}
A log scheme $X \in \logvar{k}$ admits a locally constant free log blowup $\tilde{X} \in \logvar{k}^\lcf$.
\end{proposition}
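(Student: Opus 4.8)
The plan is to obtain $\tilde X$ from a carefully chosen subdivision of the combinatorial skeleton of $X$: first a subdivision that makes the skeleton simplicial and rigidifies the monodromy of the characteristic sheaf along strata, then a further refinement making every characteristic monoid free, after which $\tilde X$ will be \emph{locally constant free} essentially by Corollary~\ref{cor:locconstequivconst}.

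Concretely, I would use the standard dictionary that log blowups of $X$ are exactly the pullbacks of subdivisions of the Artin (or Kato) fan $\mathcal{A}_X$ of $X$ --- equivalently, \'etale-locally, of subdivisions of the rational polyhedral cone $\sigma_Q = \Hom(Q, \RR_{\geq 0})$ attached to a chart $Q \to M_X$ --- and that under such a subdivision the characteristic monoids of the blowup are the monoids dual to the cones of the subdivision. First pass to the barycentric subdivision $\mathcal{A}_b \to \mathcal{A}_X$: being canonical it descends to an honest log blowup $X_b \to X$; it is simplicial; and it has the crucial feature that the stabiliser in $\operatorname{Aut}(\mathcal{A}_X)$ of any cone acts trivially on that cone, since it must preserve the defining flag of barycentres and hence the induced order on the rays. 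Consequently the monodromy of $\bar M_{X_b}$ is trivial along every stratum of the rank stratification. Next refine $\mathcal{A}_b$ into smooth (unimodular) cones by the classical toric resolution of singularities --- repeatedly reduce the multiplicity of each simplicial cone by star subdividing at a suitable lattice point --- carried out $\operatorname{Aut}$-equivariantly, which is possible precisely because after the first step every cone is rigid, so a resolution chosen on one cone of each orbit transports unambiguously to the whole orbit (resolving in order of dimension to stay compatible on faces). Pulling back along $X$ yields a log blowup $\tilde X \to X$; it is proper, hence in $\logvar{k}$; its characteristic stalks are all free, $\bar M_{\tilde X, \bar x} \cong \NN^{r(x)}$; and the monodromy of $\bar M_{\tilde X}$ remains trivial along strata.

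To finish, stratify $\tilde X$ by the rank $r(x) = \rk \bar M_{\tilde X, \bar x}$; each $Y_r$ is locally closed, and since $\bar M_{\tilde X}$ is constructible we may refine each $Y_r$ into locally closed pieces on which $\bar M_{\tilde X}$ is a locally constant sheaf, hence --- with stalk $\NN^r$ and trivial monodromy --- the constant sheaf $\underline{\NN^r}$. The log structure itself on such a piece $S$ is then an extension of $\underline{\NN^r}$ by $\OO_S^*$, and Corollary~\ref{cor:locconstequivconst} --- whose proof trivialises, via Lemma~\ref{lem:noethdecomposelinebundles}, the $\GG_m$-torsors of local liftings of the $r$ generators --- produces a further locally closed stratification of $S$ into constant free log schemes in the sense of Definition~\ref{def:constant log structure}. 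Assembling these stratifications over all $r$ exhibits $\tilde X \in \logvar{k}^{\lcf}$.

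The main obstacle is the first step, or rather the realisation that making the cones smooth is not enough on its own: the characteristic sheaf of a smooth-cone subdivision can still carry nontrivial monodromy along a stratum --- e.g.\ the coordinate swap on $\underline{\NN^2}$ over a nontrivial $\mu_2$-cover --- which would keep $\tilde X$ out of $\logvar{k}^{\lcf}$. Inserting the canonical barycentric subdivision first is exactly what rigidifies the combinatorics and kills this monodromy, and one must then check that the standard toric resolution can be run $\operatorname{Aut}$-equivariantly so that it still descends to a log blowup of $X$. Everything else --- freeness of the stalks after the smooth refinement, and the passage from pointwise freeness plus trivial monodromy to a genuine locally-constant-free stratification --- is formal given Lemma~\ref{lem:noethdecomposelinebundles} and Corollary~\ref{cor:locconstequivconst}.
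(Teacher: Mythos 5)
Your proof is correct and follows the same overall strategy as the paper: first kill the monodromy of the characteristic sheaf by a log blowup, then refine by a toric resolution so that all characteristic stalks become free, then stratify and invoke Corollary~\ref{cor:locconstequivconst} (i.e.\ Lemma~\ref{lem:noethdecomposelinebundles}) to trivialise the extension by $\OO^*$. The one substantive difference is in the first step: the paper simply cites Nizio\l{}'s theorem that every f.s.\ log scheme admits a log blowup with Zariski-local charts, whereas you give a self-contained combinatorial argument via the barycentric subdivision of the Artin fan, using that an automorphism preserving a flag of cones must fix the corresponding barycentres and hence acts trivially on the barycentric cone. Your route is more explicit and makes visible exactly why monodromy is the obstruction (your $\mu_2$-swap example is the right one to have in mind), at the cost of having to verify that the subsequent unimodular refinement can be run equivariantly and compatibly on faces --- which, as you note, is exactly what the rigidity bought by the barycentric step guarantees; the paper's citation packages all of this away. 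Both arguments then conclude identically, and your final passage from constructibility plus trivial monodromy to a locally-constant-free stratification matches the paper's use of Corollary~\ref{cor:locconstequivconst}.
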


\begin{proof}
    
    Take a log blowup of $X$ that admits charts Zariski locally \cite[Theorem 5.4]{niziol}. Refine this log blowup by a log blowup $\tilde{X}$ on which the cones are free as in the proof of \cite[Theorem 11]{originaltoricvarsbook}, so the stalks $\bar M_{\tilde{X},x} \simeq \NN^r$ are free monoids. There is a locally closed stratification on which its characteristic monoid is constant. We are done by Corollary~\ref{cor:locconstequivconst}.

\end{proof}

\begin{corollary}
\label{cor:psisurjective}
The map 
\begin{equation}
\label{eqn:vogelpresmap}
    \Psi: \ordkvar{k}[P] \to \klvar{k}
\end{equation}
sending $[X] P^k$ to the constant log scheme $(X, \NN^k) = X \times P^k$ is surjective.
\end{corollary}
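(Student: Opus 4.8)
The plan is to show surjectivity of $\Psi$ by proving that every generator $[X]$ of $\klvar{k}$, for $X \in \logvar{k}$, lies in the image. The overall strategy is a Noetherian induction on the underlying scheme of $X$, reducing $[X]$ to a sum of classes of constant free log schemes, each of which is manifestly of the form $[X_\sigma] P^{r_\sigma}$ and hence in the image.

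\textbf{Step 1: Reduce to locally constant free log schemes.} By Proposition~\ref{prop:niceblowup}, any $X \in \logvar{k}$ admits a locally constant free log blowup $\tilde X \to X$ with $\tilde X \in \logvar{k}^\lcf$. The log blowup relation \eqref{intro:eqn:logmodreln} gives $[X] = [\tilde X]$ in $\klvar{k}$, so it suffices to prove that $[\tilde X]$ is in the image of $\Psi$. Thus we may assume $X$ itself is locally constant free.

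\textbf{Step 2: Peel off strata using the strict scissor relation.} Since $X \in \logvar{k}^\lcf$, there is a locally closed stratification $X = \bigsqcup_\sigma X_\sigma$ with each $X_\sigma \cong (X_\sigma, \NN^{r_\sigma})$ constant free. Choosing a stratum $X_\sigma$ that is \emph{closed} in $X$ (which exists, e.g.\ a minimal one, and whose complement is open), the inclusion $X_\sigma \hookrightarrow X$ is a strict closed immersion: it is a closed immersion of schemes, and on the closed stratum the ambient log structure restricts to the constant free one carried by $X_\sigma$ because the characteristic sheaf is locally constant there. Hence the strict scissor relation \eqref{intro:eqn:strictscissor} gives $[X] = [X_\sigma] + [X \setminus X_\sigma]$. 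Now $X_\sigma = (X_\sigma, \NN^{r_\sigma}) = X_\sigma^\circ \times P^{r_\sigma}$, so $[X_\sigma] = \Psi([X_\sigma^\circ] P^{r_\sigma})$ is in the image. The open complement $X \setminus X_\sigma$ is again locally constant free (it inherits the stratification by the remaining strata), and its underlying scheme is strictly smaller, so by Noetherian induction on the underlying scheme $[X \setminus X_\sigma]$ is also in the image of $\Psi$. Adding the two, $[X]$ is in the image, completing the induction and hence the proof.

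\textbf{Main obstacle.} The essential content is really already packaged into the earlier results; the one point requiring a little care is verifying that a closed stratum $X_\sigma$ of a locally constant free $X$ indeed receives a \emph{strict} closed immersion from $(X_\sigma, \NN^{r_\sigma})$ — i.e.\ that the constant log structure appearing in the definition of the stratification agrees with the restriction of $M_X$. This follows from Corollary~\ref{cor:locconstequivconst} (which tells us $[X]$ only depends on $X^\circ$ together with $\bar M_X$, and in particular that on a locally closed piece where $\bar M_X$ is constant the class is that of the constant log scheme), so there is no real difficulty. One should also note the base case of the induction: when the underlying scheme is empty, $[X] = 0 = \Psi(0)$.
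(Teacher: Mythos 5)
Your argument is correct and is essentially the paper's: the corollary is derived directly from Proposition~\ref{prop:niceblowup} by exactly the reduction you describe (pass to a locally constant free log blowup using the log blowup relation, then decompose into constant free strata using the strict scissor relations, each stratum being $\Psi([X_\sigma^\circ]P^{r_\sigma})$). The only quibble is your parenthetical claim that a closed stratum always exists --- a general finite locally closed partition need not contain a closed member (e.g.\ two copies of $\Aff^1$ partitioned by swapping the origins) --- but this is harmless, since you can instead peel off a nonempty open subset of $X$ contained in a single stratum (or first refine the stratification to satisfy the frontier condition) and run the same Noetherian induction.
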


It remains to show that $\ker \Psi$ is generated by $P(P+\bra{\GG_m})$. For this, we need two final propositions.

\begin{definition}
    Let $X$ be a locally constant free log scheme. Take the locally closed stratification $X = \bigsqcup_{i \geq 0} X_i$ where $X_i$ is the locus where the rank of $\bar M_X$ is $i$. We define \[[X]_{\lcf} = \sum_i [X_i] \cdot P^i \in \ordkvar{k}[P].\]
\end{definition}

\begin{proposition}
    \label{prop:basicblowuprelns}
    Let $I \subset \ordkvar{k}[P]$ be the ideal generated by elements
    \[ [X]_\lcf  - [\tilde{X}]_\lcf \]
    for every log blowup $\tilde{X} \to X$ of a constant free log scheme $X$ by a locally constant free log scheme $\tilde{X}$. Then $I$ is precisely the kernel of \eqref{eqn:vogelpresmap}.
    
    Furthermore, $I$ is also generated by relations $[T]_{\lcf} - [\tilde{T}]_\lcf$ for a smooth toric blowup $\tilde{T} \to T$ of a smooth toric variety.
\end{proposition}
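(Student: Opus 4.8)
The plan is to prove the two assertions in turn: first that $I = \ker\Psi$, and then that $I$ is already generated by the toric relations. For the inclusion $I \subseteq \ker\Psi$, observe that for a locally constant free $X$ with stratification $X = \bigsqcup X_i$ by rank, the class $[X] \in \klvar{k}$ equals $\sum_i [X_i] P^i = \Psi([X]_\lcf)$ by the strict scissor relations together with Corollary~\ref{cor:locconstequivconst}; and if $\tilde X \to X$ is a log blowup then $[\tilde X] = [X]$ in $\klvar{k}$ by the log blowup relation, so $\Psi([X]_\lcf - [\tilde X]_\lcf) = 0$. Hence every generator of $I$ lies in $\ker\Psi$.

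The reverse inclusion $\ker\Psi \subseteq I$ is the heart of the argument. Suppose $\alpha = \sum_k [X_k] P^{n_k} \in \ordkvar{k}[P]$ maps to $0$ in $\klvar{k}$. By the definition of $\klvar{k}$ as a quotient of the free abelian group on iso-classes, $\Psi(\alpha) = 0$ means that after applying strict scissor relations and log blowup relations (and their formal inverses) one can pass between $\sum_k [(X_k,\NN^{n_k})]$ and the empty sum. I would first reduce a general relation in $\klvar{k}$ to the two generating families: (i) strict scissor relations $[X] = [Z] + [X\setminus Z]$, and (ii) $[\tilde X] = [X]$ for log blowups. For a relation of type (i) between locally constant free log schemes, both sides have equal $[-]_\lcf$ in $\ordkvar{k}[P]$ already — no new relation in $\ordkvar{k}[P]$ is needed — because $[-]_\lcf$ is additive over strict stratifications (the rank locus of $Z$ is the intersection of the rank locus of $X$ with $Z$). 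For a relation of type (ii), $[\tilde X]_\lcf - [X]_\lcf$ is, by definition, a generator of $I$ provided $X$ is constant free; the work is to reduce the general log blowup $\tilde X \to X$ with $X$ merely locally constant free to this case. Here one stratifies $X = \bigsqcup X_\sigma$ into constant free pieces and pulls back the blowup, noting a log blowup of a constant free log scheme $(X_\sigma, \NN^r)$ is governed purely by a subdivision of $\RR^r_{\geq 0}$ and hence is (étale-locally, then globally after Lemma~\ref{lem:noethdecomposelinebundles}) of the form $X_\sigma \times (\text{toric blowup of }\Aff^r)$ — but that already anticipates the second statement, so I would instead just record that $[\tilde X]_\lcf - [X]_\lcf = \sum_\sigma \big([\widetilde{X_\sigma}]_\lcf - [X_\sigma]_\lcf\big)$ lies in $I$ by strict-scissor additivity of $[-]_\lcf$ and the definition of $I$. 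Assembling these, any element of $\ker\Psi$ is a $\ZZ$-combination of generators of $I$, giving $\ker\Psi \subseteq I$.

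For the final sentence — that $I$ is generated by $[T]_\lcf - [\tilde T]_\lcf$ for smooth toric blowups $\tilde T \to T$ of smooth toric varieties — I would take a generator $[X]_\lcf - [\tilde X]_\lcf$ with $X = (X,\NN^r)$ constant free and $\tilde X \to X$ a log blowup. Such a blowup corresponds to a rational subdivision $\Sigma$ of $\RR^r_{\geq 0}$, and by the toric resolution argument already invoked in the proof of Proposition~\ref{prop:niceblowup} (cf.\ \cite[Theorem 11]{originaltoricvarsbook}) one may refine $\Sigma$ to a smooth (unimodular) fan, so without loss of generality $\tilde X = X \times_{\Aff^r} T_\Sigma$ where $T_\Sigma \to \Aff^r$ is a smooth toric blowup. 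Stratifying $\Aff^r$ and $T_\Sigma$ by torus orbits, each orbit is a product of $\GG_m$'s with a standard log point $P^i$, and the class computation becomes purely combinatorial: $[\tilde X]_\lcf - [X]_\lcf = [X] \cdot \big([T_\Sigma]_\lcf^{\Aff^r} - [\Aff^r]_\lcf\big)$ where the bracketed term depends only on the fans. Thus every generator of $I$ is an $\ordkvar{k}$-multiple of a toric generator $[\Aff^r]_\lcf - [T_\Sigma]_\lcf$, which proves the claim.

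The main obstacle I anticipate is the reverse inclusion $\ker\Psi \subseteq I$: one must be careful that "$\Psi(\alpha)=0$" genuinely unwinds into a finite chain of the two allowed moves applied to \emph{honest log schemes} (not virtual classes), and that each move, when translated through $[-]_\lcf$, lands in $I$ — in particular the log blowup move requires knowing that a log blowup of a locally constant free scheme can be taken locally constant free (Proposition~\ref{prop:niceblowup}) so that $[-]_\lcf$ is even defined on it, and that the decomposition into constant-free strata is compatible with pulling back the blowup. A secondary subtlety is checking that $[-]_\lcf$ is well-defined and additive under strict stratifications, which rests on Corollary~\ref{cor:locconstequivconst} ($[X]$ depends only on $X^\circ$ and $\bar M_X$).
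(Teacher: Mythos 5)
Your overall strategy matches the paper's: the easy inclusion $I \subseteq \ker\Psi$ is handled the same way (additivity of $[-]_\lcf$ over strict stratifications plus Corollary~\ref{cor:locconstequivconst} and the log blowup relation), and the reduction of a log blowup of a constant free log scheme to a smooth toric blowup of $\Aff^r$ via the pullback diagram is also the paper's route. But there is a genuine gap in your reverse inclusion $\ker\Psi \subseteq I$, and it is exactly the point you flag as ``the main obstacle'' without resolving it. A witness for $\Psi(\alpha)=0$ is a finite $\ZZ$-linear combination of strict scissor and log blowup relations among \emph{arbitrary} objects of $\logvar{k}$, not among locally constant free ones; your case analysis only treats ``relations of type (i) between locally constant free log schemes'' and log blowups whose source and target are (locally) constant free, so it does not cover the general relation, and $[-]_\lcf$ is not even defined on the intermediate terms. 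The paper closes this by, in effect, extending $[-]_\lcf$ to a retraction $\klvar{k} \to \ordkvar{k}[P]/I$ using Proposition~\ref{prop:niceblowup}: send $[X]$ to $[\tilde X]_\lcf$ for \emph{some} locally constant free log blowup $\tilde X \to X$. The whole content then becomes the well-definedness statement that for any log modifications $Y \to X$, $\tilde X \to X$, $\tilde Y \to Y$ with $\tilde X, \tilde Y$ locally constant free, one has $[\tilde Y]_\lcf - [\tilde X]_\lcf \in I$; this is proved by refining $\tilde Y$ until it admits a log blowup to $\tilde X$, stratifying $\tilde X$ into constant free pieces, and pulling the stratification back. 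Without this (or an equivalent device), your argument does not establish $\ker\Psi \subseteq I$.

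Two smaller points. First, compatibility of the retraction with the strict scissor relations is not entirely automatic as you suggest: one must choose locally constant free log blowups of $X$, $Z$, and $X \setminus Z$ compatibly, which again leans on the well-definedness statement above. Second, in the toric reduction your formula $[\tilde X]_\lcf - [X]_\lcf = [X]\cdot([T_\Sigma]_\lcf^{\Aff^r} - [\Aff^r]_\lcf)$ conflates the fiber over the origin with the whole toric blowup: the blowup of the constant free $X = (X,\NN^r)$ is pulled back from the fiber $T_0$ of $T \to \Aff^r$ over $\vec 0$, so the relevant quantity is $[X^\circ]\cdot([T_0]_\lcf - P^r)$, which is the \emph{difference of two} toric generators, namely $([\Aff^r]_\lcf - [T]_\lcf) - ([\Aff^r \setminus \vec 0\,]_\lcf - [T \setminus T_0]_\lcf)$, rather than an $\ordkvar{k}$-multiple of a single one. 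This does not affect the conclusion that $I$ is generated by the toric relations, but the bookkeeping should be corrected.
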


\begin{proof}
    By Proposition \ref{prop:niceblowup}, for the first part it suffices to show that for all triples of log modifications $Y \to X, \tilde{X} \to X, \tilde{Y} \to Y$ with $X,Y \in \logvar{k}, \tilde{X}, \tilde{Y} \in \logvar{k}^\lcf$ we have $\bra{\tilde{Y}}_{\lcf} - \bra{\tilde{X}}_\lcf \in I$.

    By refining $\tilde{Y}$, we can reduce to the case that there is a log blowup $\tilde{Y} \to \tilde{X}$. Now, stratify $\tilde{X} = \bigsqcup_i \tilde{X}_i$ into constant free log schemes, which induces by pullback a stratification $\tilde{Y} = \bigsqcup_i \tilde{Y}_i$. By definition of $I$, we have $\bra{\tilde{Y}_i}_\lcf - \bra{\tilde{X}_i}_\lcf \in I$, and hence $\bra{\tilde{Y}}_{\lcf} - \bra{\tilde{X}}_\lcf \in I$.

    Now let $\tilde X \to X$ be a log blowup of a constant free log scheme $X$ by a locally constant free log scheme $\tilde X$. There is a further log blowup $\tilde X' \to \tilde X$ fitting into a pullback diagram 
    \[
    \begin{tikzcd}
        \tilde X' \lpbstrict \ar[r] \ar[d]       &  T_0 \lpbstrict \ar[r] \ar[d]    &T \ar[d]      \\
        X \ar[r]       &\vec 0 \ar[r]         &\Aff^r
    \end{tikzcd}    
    \]
    where $T \to \Aff^r$ is a smooth toric blowup.
    The ideal $I$ is then generated by relations of the form $\bra{T_0}_\lcf = P^r$. This relation is itself the difference of the log blowup relation $[T]_{\lcf} - [\tilde{T}]_\lcf$ for $T = \Aff^r$ and $T = \Aff^r \setminus 0$.  
\end{proof}

Now it remains to compute classes of smooth toric varieties, as per the following proposition.

\begin{proposition}
\label{prop:toriclcf}
Let $X$ be a smooth toric variety of dimension $n$, with fan $\Sigma$. Then
\[
\bra{X}_{\lcf} \equiv \bra{\GG_m}_{\lcf}^n + (1-\chi_c(\Sigma)) P\bra{\GG_m}_\lcf^{n-1} \pmod {P(P+\bra{\GG_m}}),
\]
where $\chi_c$ is the compactly supported Euler characteristic.
\end{proposition}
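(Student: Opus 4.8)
The plan is to compute $[X]_\lcf$ for a smooth toric variety $X$ with fan $\Sigma\subseteq\RR^n$ by using the standard torus-orbit stratification. Every smooth toric variety decomposes as a disjoint union of orbits $O_\tau\cong\GG_m^{n-\dim\tau}$, one for each cone $\tau\in\Sigma$, and on the orbit $O_\tau$ the characteristic monoid $\bar M_X$ is constant of rank $\dim\tau$. Since the log structure on each orbit is constant free (the orbit is itself a torus, with log structure pulled back from the toric boundary, which is free because $X$ is smooth so each cone is a smooth cone), this is a locally constant free stratification and we may read off
\[
[X]_\lcf=\sum_{\tau\in\Sigma}[\GG_m]^{\,n-\dim\tau}\,P^{\dim\tau}\ \in\ \ordkvar{k}[P].
\]
First I would verify this identity carefully, using Corollary~\ref{cor:locconstequivconst} to see that each stratum contributes $[O_\tau]\cdot P^{\dim\tau}=[\GG_m]^{n-\dim\tau}P^{\dim\tau}$, independent of the embedding of the log structure.

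Next I would reduce this sum modulo the relation $P(P+[\GG_m])=0$, i.e.\ $P^2\equiv -P[\GG_m]$. By induction, $P^j\equiv (-1)^{j-1}P[\GG_m]^{j-1}$ for all $j\geq 1$, while $P^0=1$. Substituting, the $\tau=\{0\}$ term contributes $[\GG_m]^n$, and for $\tau\neq\{0\}$ with $d=\dim\tau\geq 1$ we get $[\GG_m]^{n-d}\cdot(-1)^{d-1}P[\GG_m]^{d-1}=(-1)^{d-1}P[\GG_m]^{n-1}$. Collecting,
\[
[X]_\lcf\equiv[\GG_m]^n+P[\GG_m]^{n-1}\sum_{\substack{\tau\in\Sigma\\ \dim\tau\geq 1}}(-1)^{\dim\tau-1}\pmod{P(P+[\GG_m])}.
\]
So it remains to identify the integer $\sum_{\dim\tau\geq1}(-1)^{\dim\tau-1}$ with $1-\chi_c(\Sigma)$. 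Writing $f_d$ for the number of $d$-dimensional cones in $\Sigma$, this sum is $\sum_{d\geq1}(-1)^{d-1}f_d = f_0 - \sum_{d\geq 0}(-1)^d f_d = 1 - \sum_{d\geq0}(-1)^d f_d$, using $f_0=1$. The task is therefore to show $\chi_c(\Sigma)=\sum_{d\geq0}(-1)^d f_d$, where $\chi_c(\Sigma)$ denotes the compactly supported Euler characteristic of the support $|\Sigma|\subseteq\RR^n$ (or of $\Sigma$ as a fan/cone complex — I would first pin down precisely which topological space is meant, most likely $|\Sigma|$).

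The main obstacle is this last combinatorial–topological identity: relating $\chi_c(|\Sigma|)$ to the alternating face count $\sum_d(-1)^d f_d$. The subtlety is that the cells here are relatively open cones, which are contractible but not compact, so $\chi_c$ of each open $d$-cone is $(-1)^d$ (for a nonzero cone; the cone $\{0\}$ contributes $1=(-1)^0$), and compactly supported Euler characteristic is additive over the locally closed stratification of $|\Sigma|$ by relatively open cones. Hence $\chi_c(|\Sigma|)=\sum_{\tau}\chi_c(\mathrm{relint}\,\tau)=\sum_d(-1)^d f_d$, exactly as needed. I would present this via the additivity of $\chi_c$ (citing the analogous computation $\chi_c(\GG_m)=0$, $\chi_c(\Aff^1)=1$ already implicit in the paper, or more directly $\chi_c(\RR^d)=(-1)^d$ for an open $d$-cell), being careful that a cone of dimension $d$ has relative interior homeomorphic to $\RR^d$ when $d\geq1$ and to a point when $d=0$. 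Combining the two displays then yields $[X]_\lcf\equiv[\GG_m]^n+(1-\chi_c(\Sigma))P[\GG_m]^{n-1}$, which is the claim; I would close by double-checking the sign against the two sanity checks in the introduction, namely $X=\PP^n$ (complete fan, $\chi_c=\chi$ of a sphere-like complex) and $X=\Aff^n$ (where $\Sigma$ is a single smooth cone with all its faces, $|\Sigma|$ contractible so $\chi_c=1$, giving $[X]_\lcf\equiv[\GG_m]^n$, consistent with $[\Aff^n]_\lcf=\sum_{j}\binom{n}{j}[\GG_m]^{n-j}P^j$ reducing via $P^2\equiv-P[\GG_m]$).
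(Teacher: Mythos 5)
Your proposal is correct and follows essentially the same route as the paper: stratify $X$ by torus orbits to get $[X]_\lcf=\sum_{\tau\in\Sigma}[\GG_m]^{n-\dim\tau}P^{\dim\tau}$, reduce via $P^j\equiv(-1)^{j-1}P[\GG_m]^{j-1}$, and identify the resulting alternating cone count with $1-\chi_c(\Sigma)$ by additivity of $\chi_c$ over relatively open cones. The only difference is that you spell out the final combinatorial--topological identity in more detail than the paper, which simply records the alternating sum as the compactly supported Euler characteristic of $\Sigma$ minus the origin.
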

\begin{proof}
Recall that the cones of $\Sigma$ correspond exactly to the toric strata of $X$. Here the cone $0$ corresponds to the torus $\GG_m^n$, and a cone $\sigma$ of dimension $r = r(\sigma)$ corresponds to a stratum $X_\sigma$ isomorphic to $\GG_m^{n-r}$ with log structure constant of rank $r$. All in all we find
\begin{align*}
\bra{X}_\lcf &= \bra{\GG_m}_\lcf^n + \sum_{\sigma \ne 0} \bra{X_\sigma}_\lcf &\\
             &= \bra{\GG_m}_\lcf^n + \sum_{\sigma \ne 0} [\GG_m]^{n-r(\sigma)} P^{r(\sigma)} &\\
             &\equiv \bra{\GG_m}_\lcf^n + \sum_{\sigma \ne 0} (-1)^{r-1} [\GG_m]^{n-1} P & \pmod {P(P+\bra{\GG_m})}\\
             &= \bra{\GG_m}_\lcf^n +  [\GG_m]^{n-1} P \sum_{\sigma \ne 0} (-1)^{r(\sigma)-1} &\\
             &= \bra{\GG_m}_{\lcf}^n + \chi_c(\Sigma \setminus 0)P\bra{\GG_m}^{n-1}&
\end{align*}
as required.
\end{proof}

\begin{proof}[{Proof of Theorem~\ref{thm:vogelpresentation}}]
By Corollary~\ref{cor:psisurjective}, we have a surjective map
\[
\Psi: \ordkvar{k}[P] \to \klvar{k}.
\]
By Proposition~\ref{prop:basicblowuprelns} the kernel $I$ of $\Psi$ is generated by $\bra{\tilde{X}}_\lcf - \bra{X}_\lcf$ for toric blowup $\tilde{X} \to X$ of smooth toric varieties. The fans of $\tilde{X}$ and $X$ have the same underlying topological space and hence the same Euler characteristic. By Proposition~\ref{prop:toriclcf}, we find $I \subseteq \langle P(P+\bra{\GG_m}) \rangle$. 

We have $P(P + \bra{\GG_m}) \in I$ by Example~\ref{ex:vogelrlns}, and we conclude
\[
\klvar{k} = \ordkvar{k}[P]/(P(P+\bra{\GG_m})).\qedhere
\]
\end{proof}

As a main example, we can compute the class of any toric variety.

\begin{proposition}
\label{prop:toricclass}
Let $X$ be a toric variety of dimension $n$, with fan $\Sigma$. Then
\[
\bra{X} = \bra{\GG_m}^n + (1-\chi_c(\Sigma)) P\bra{\GG_m}^{n-1}.
\]

\end{proposition}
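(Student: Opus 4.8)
The plan is to reduce to the case of a \emph{smooth} toric variety, which is handled by Proposition~\ref{prop:toriclcf}, using a toric resolution as the connecting log blowup. By toric resolution of singularities there is a subdivision $\Sigma'$ of $\Sigma$, obtained as an iterated star subdivision, all of whose cones are unimodular; set $\tilde X = X_{\Sigma'}$, a smooth toric variety of dimension $n$. Each star subdivision is a toric blowup, hence a log blowup, so $\tilde X \to X$ is a composition of log blowups, and repeated use of the log blowup relation \eqref{intro:eqn:logmodreln} gives $[\tilde X] = [X]$ in $\klvar{k}$. (Alternatively, Proposition~\ref{prop:niceblowup} produces a locally constant free log blowup of $X$; since a log blowup of a toric variety is a toric blowup, such a blowup is automatically a smooth toric variety.)

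Next I would read off $[\tilde X]$ from Proposition~\ref{prop:toriclcf}, which states, in $\ordkvar{k}[P]$,
\[
[\tilde X]_{\lcf} \equiv [\GG_m]_{\lcf}^n + \bigl(1 - \chi_c(\Sigma')\bigr)\,P\,[\GG_m]_{\lcf}^{n-1} \pmod{P(P + [\GG_m])}.
\]
Applying the surjection $\Psi$ of \eqref{eqn:vogelpresmap}: one has $\Psi([\tilde X]_{\lcf}) = [\tilde X]$ (stratify $\tilde X$ by the loci where $\bar M_{\tilde X}$ has constant rank and combine the strict scissor relations with Corollary~\ref{cor:locconstequivconst}), while $\Psi$ kills $P(P + [\GG_m])$ by Example~\ref{ex:vogelrlns}. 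Hence
\[
[X] = [\tilde X] = [\GG_m]^n + \bigl(1 - \chi_c(\Sigma')\bigr)\,P\,[\GG_m]^{n-1} \qquad \text{in } \klvar{k}.
\]

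It then remains to check $\chi_c(\Sigma') = \chi_c(\Sigma)$. Writing $\sigma^\circ$ for the relative interior of a cone $\sigma$, one has $\chi_c(\Sigma) = \sum_{\sigma \in \Sigma}\chi_c(\sigma^\circ) = \sum_{\sigma \in \Sigma}(-1)^{\dim\sigma}$, since $\sigma^\circ$ is homeomorphic to $\RR^{\dim\sigma}$. For the subdivision $\Sigma'$, the relative interiors of those cones of $\Sigma'$ that lie in $\sigma^\circ$ partition $\sigma^\circ$, so additivity of the compactly supported Euler characteristic over this piecewise-linear decomposition gives $\chi_c(\sigma^\circ) = \sum_{\tau \in \Sigma',\ \tau^\circ \subseteq \sigma^\circ}(-1)^{\dim\tau}$; summing over $\sigma \in \Sigma$ yields $\chi_c(\Sigma) = \chi_c(\Sigma')$. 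Substituting into the display above finishes the proof.

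The only delicate points are bookkeeping: one must ensure that the resolution is literally a composition of log blowups (so that \eqref{intro:eqn:logmodreln} applies on the nose, not merely up to log modification) and that $\chi_c$ depends only on the support $|\Sigma|$ and not on the chosen fan structure. Neither is a real obstacle once Proposition~\ref{prop:toriclcf} is in hand. An alternative route that avoids the resolution step is to stratify $X$ directly into its torus orbits $O_\sigma \cong (\GG_m^{n-\dim\sigma}, Q_\sigma)$ and to compute each $[(\pt, Q_\sigma)] = (-1)^{\dim\sigma-1}[\GG_m]^{\dim\sigma-1}P$ by resolving the single cone $\sigma$; this reproduces the same sign count $\sum_{\sigma\ne 0}(-1)^{\dim\sigma-1} = 1 - \chi_c(\Sigma)$.
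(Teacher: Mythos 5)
Your proposal is correct and follows essentially the same route as the paper: reduce to the smooth case via a toric resolution (a log blowup, so $[\tilde X]=[X]$), invoke Proposition~\ref{prop:toriclcf}, and observe that $\chi_c$ is unchanged under subdivision since the fans share the same support. You merely spell out the details (the partition of relative interiors for $\chi_c(\Sigma')=\chi_c(\Sigma)$, and the compatibility $\Psi([\tilde X]_{\lcf})=[\tilde X]$) that the paper's two-line proof leaves implicit.
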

\begin{proof}
The smooth case is Proposition~\ref{prop:toriclcf}. The general case follows as both sides are invariant under log blowups.
\end{proof}

\begin{example}
    Let $X$ be a proper toric variety of dimension $n$. Then the fan $\Sigma$ is a complete subdivision of $\RR^n$. The Euler characteristic $\chi_c$ is multiplicative and $\chi_c(\RR) = -1$, $\chi_c(\Sigma) = (-1)^n$.

    So \[\bra{X} = \bra{\GG_m}^n + (1-(-1)^n) P\bra{\GG_m}^{n-1}.\]

    For another way to compute the same formula, note that $X$ and $\PP^n$ have a common log blowup and hence the same class in $\klvar{k}$. The space $\PP^n$ has $\binom{n+1}{r+1}$ strata of dimension $r$, and using the relation $P(P+\bra{\GG_m}) = 0$ and the binomial expansion of $(1-1)^{n+1}$ one arrives at the same formula.

\end{example}

\begin{example}
    Let $X$ be a toric variety of dimension $n$ whose fan $\Sigma \subset \RR^n$ is obtained by taking the fan over a polyhedral complex $Q \subset \RR^{n} \setminus 0$. Then we have
   \[
   \bra{X} = \bra{\GG_m}^n + \chi_c(Q) P\bra{\GG_m}^{n-1}.
   \]

   If $X$ is an affine toric variety of dimension $n$. Then $\Sigma$ is a single cone of dimension $d \leq n$. Choose a sharp map from $f : \Sigma \to \NN$ using using \cite[Proposition I.2.2.1]{ogusloggeom}. Then $\Sigma$ is the fan over a polyhedral complex $f^{-1}(1)$ that is contractible if $d > 0$ and is empty otherwise. If $d > 0$, then
   \[\bra{X} = [\GG_m]^n + P \bra{\GG_m}^{n-1}.\]
\end{example}

The presentation from Theorem~\ref{thm:vogelpresentation} leads to two natural maps $\klvar{k} \to \ordkvar{k}$.

\begin{definition}
    Define two morphisms $\tau, \rho : \klvar{k} \to \ordkvar{k}$ of $\ordkvar{k}$-algebras by
    
    \[\tau(P) = 0\]
    and
    \[\rho(P) = -\GG_m\]
    respectively. They are the \emph{log Betti map} $\tau$ and the \emph{log Hodge map} $\rho$.
\end{definition}

\begin{example}
    For a toric variety $X$ of dimension $n$ we find $\tau(\bra{X}) = \bra{\GG_m^n}$ and $\rho(\bra{X})) = \chi_c(\Sigma) \bra{\GG_m^n}$.
\end{example}

\begin{remark}
The function sending $X$ to its underlying scheme $X^\circ$ does \emph{not} induce a morphism 
\[\klvar{k} \dashrightarrow \ordkvar{k}\]
because it does not satisfy the log blowup relations.
\end{remark}

A natural consequence of the presentation in Theorem \ref{thm:vogelpresentation} is that ring morphisms $f: \klvar{k} \to R$ are uniquely determined by $f|_{\ordkvar{k}}$ and $f(P)$.

\begin{proposition}\label{prop:logeulerchar}
There is a unique ring homomorphism 
\[\chi^{\rm log} : \klvar{\CC} \to \ZZ\]
extending the classical Euler characteristic \[\chi: \ordkvar{\CC} \to \ZZ . \]

We have $\chi^{\rm log} = \chi \circ \tau$, i.e. $\chi^{\rm log}(X) = \chi(X_0)$, with $X_0$ the locus where the log structure is trivial.

\end{proposition}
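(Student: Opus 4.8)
The plan is to use the presentation from Theorem~\ref{thm:vogelpresentation}, which gives $\klvar{\CC} \simeq \ordkvar{\CC}[P]/(P^2 + P[\GG_m])$. Since any ring homomorphism out of this ring is determined by its restriction to $\ordkvar{\CC}$ together with the image of $P$, and since we want our homomorphism to extend $\chi : \ordkvar{\CC} \to \ZZ$, the only freedom is the choice of $\chi^{\rm log}(P) \in \ZZ$. So the first step is to determine which values of $\chi^{\rm log}(P)$ are compatible with the relation: applying $\chi^{\rm log}$ to $P^2 + P[\GG_m] = 0$ gives $\chi^{\rm log}(P)^2 + \chi^{\rm log}(P) \cdot \chi(\GG_m) = 0$. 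Since $\chi(\GG_m) = \chi(\Aff^1) - \chi(\pt) = 1 - 1 = 0$, this forces $\chi^{\rm log}(P)^2 = 0$, hence $\chi^{\rm log}(P) = 0$ (as $\ZZ$ has no nilpotents).

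Next I would observe that the value $\chi^{\rm log}(P) = 0$ does in fact yield a well-defined ring homomorphism: the assignment $\tau : \klvar{k} \to \ordkvar{k}$ with $\tau(P) = 0$ was already defined after Theorem~\ref{thm:vogelpresentation} (it descends precisely because $0^2 + 0 \cdot [\GG_m] = 0$), so $\chi^{\rm log} \coloneqq \chi \circ \tau$ is a ring homomorphism $\klvar{\CC} \to \ZZ$, and it restricts to $\chi$ on $\ordkvar{\CC}$ since $\tau$ is a $\ordkvar{\CC}$-algebra map. Combined with the uniqueness from the previous paragraph, this establishes existence and uniqueness of $\chi^{\rm log}$, and simultaneously the identity $\chi^{\rm log} = \chi \circ \tau$.

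For the final assertion $\chi^{\rm log}(X) = \chi(X_0)$ where $X_0 \subseteq X^\circ$ is the open locus where the log structure is trivial, I would argue as follows. By Proposition~\ref{prop:niceblowup} and the log blowup relation, we may replace $X$ by a locally constant free log blowup without changing $[X]$ or $\chi(X_0)$ — here one must check that the trivial-locus Euler characteristic is itself a log blowup invariant, which follows because a log blowup $\tilde X \to X$ is an isomorphism over the trivial locus $X_0$ (the log ideal being blown up is already invertible there), so $\tilde X_0 \cong X_0$. Then for a locally constant free $X$, stratify $X = \bigsqcup_i X_i$ by the rank $i$ of $\bar M_X$; the stratum $X_0$ is exactly the trivial locus, and $[X] = \sum_i [X_i] P^i$ in $\klvar{k}$ via $\Psi$. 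Applying $\tau$ kills every term with $i > 0$, leaving $\tau([X]) = [X_0]$, and applying $\chi$ gives $\chi^{\rm log}(X) = \chi(X_0)$ as claimed.

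The main obstacle is the last step: one needs the compatibility between the abstract presentation and the geometry, namely that writing $[X]$ via the stratification by log rank is legitimate (this is essentially Corollary~\ref{cor:psisurjective} together with the strict scissor relations applied to the rank stratification) and that passing to a locally constant free model does not disturb $\chi(X_0)$. Both are routine once one notes the log blowup is an isomorphism away from the non-trivial log locus, but this point should be stated explicitly rather than glossed over. Everything else is a one-line consequence of the presentation and the vanishing $\chi(\GG_m) = 0$.
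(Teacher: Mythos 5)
Your proof is correct and follows essentially the same route as the paper: use the presentation of Theorem~\ref{thm:vogelpresentation} to see that any extension $f$ is determined by $f(P)$, which must satisfy $f(P)(f(P)+\chi(\GG_m))=0$, and conclude $f(P)=0$ from $\chi(\GG_m)=0$, so $f=\chi\circ\tau$. Your final paragraph verifying $\chi^{\rm log}(X)=\chi(X_0)$ via the rank stratification and the fact that log blowups are isomorphisms over the trivial locus is a correct elaboration of a point the paper leaves implicit.
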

\begin{proof}
    By the presentation given in Theorem \ref{thm:vogelpresentation}, an extension $f$ of $\chi$ is determined by $f(P)$ and must satisfy $f(P)(f(P) + \chi(\GG_m)) = 0$. As $\chi(\GG_m) = 0$, we find $f(P) = 0$, and hence $f = \chi \circ \tau$.
\end{proof}

\begin{remark}
    Take $X \in \logvar{\CC}$. Its log Betti cohomology is defined as the singular cohomology with $\QQ$-coefficients
    \[H^*(\KN{X}, \QQ)\]
    of the Kato-Nakayama space $\KN{X}$, a real blowup of $X$ (written $X^{\rm log}$ in the original paper \cite{katonakayama}). Taking compact support and the alternating sum gives an alternative construction of $\chi^{\rm log}$ as 
    \[\chi^{\rm log}(X) = \sum (-1)^q \dim_\QQ H^q_c(\KN{X}, \QQ).\]
    
\end{remark}

\section{The log Hodge map}
\label{sec:hodge}

In this section, we work over $\Spec \CC$. Let $X$ be a log scheme over $\Spec \CC^\circ$ with smooth, projective underlying scheme $X^\circ$. 

Define
\[\Elog(X) \coloneqq \sum \dim H^q(\wedge^p \lkah{X})u^p v^q \qquad \in \ZZ[u, v].\]
If $X = X^\circ$ has trivial log structure, this coincides with the usual Hodge--Deligne $e-$polynomial $\Elog(X) = e(X)$.

\begin{example}
\label{ex:elogp1}
Let $X$ be $\PP^1$ with the toric log structure. Then $\lkah{X}$ is the trivial line bundle over $X$. Its cohomology vanishes, so we have $\Elog(X) = 1 + u$.
\end{example}

\begin{example}
\label{ex:elogp}
Let $X = P$ be the standard log point. Then $\lkah{X}$ is again the trivial line bundle over $P$ and $\Elog(P) = 1 + u$.
\end{example}

One would like to define $\Elog$ for non-projective varieties using $\Elog$ on projective ones by checking $\Elog$ satisfies the scissor relations. Unfortunately, it does not.

\begin{example}\label{ex:Epolynotwelldef}
We compactify $\GG_m$ in two different ways, embedding it in $\PP^1$ with its toric log structure and in $(\PP^1)^\circ$ with the trivial log structure.

Examples \ref{ex:elogp1} and \ref{ex:elogp} show 
\[
\Elog(\PP^1) = \Elog(P) = 1 + u.
\]

The $\Elog$-polynomials of their underlying schemes are the same as the usual $e$-polynomials:
\[\Elog((\PP^1)^\circ) = 1 + uv, \qquad \Elog(\pt) = 1.\]

Note the differences
\[\Elog(\PP^1) - 2 \Elog(P) = -1 -u\]
\[\Elog((\PP^1)^\circ) - 2 \Elog(\pt) = 1 + uv - 2 = uv - 1\]
do not coincide, and hence $\Elog$ does not respect the strict scissor relations. However, the differences coincide after setting $v=-1$.
\end{example}

\begin{proposition}
\label{prop:nologhodge}
    There is no map
    \[
    \phi: \logvar{\CC} \to \ZZ[u,v]
    \]
    that satisfies $\phi(X) = \phi(Z) + \phi(X \setminus Z)$ for strict closed subschemes $Z \subset X$ and that agrees with $\Elog$ on smooth log smooth projective schemes.

\end{proposition}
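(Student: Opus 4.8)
The plan is to assume such a $\phi$ exists and to derive a contradiction from the two compactifications of the trivial-log-structure $\GG_m$, one inside $\PP^1$ with trivial log structure and one inside $\PP^1$ with its toric log structure, just as in Example~\ref{ex:Epolynotwelldef}. The key point to respect is that the hypothesis only constrains $\phi$ on log schemes that are simultaneously smooth, log smooth and projective; in particular it says nothing \emph{a priori} about the standard log point $P$, which is not log smooth over $\CC$. So instead of plugging in $\Elog(P)$, I will determine $2\phi(P)$ from the strict scissor relations and known values, and then invoke integrality of the coefficients of $\phi(P)\in\ZZ[u,v]$.

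First I would pin down $\phi$ on the trivial-log side. A point and $\PP^1$, with their trivial log structures, are smooth, log smooth and projective, so $\phi(\pt)=\Elog(\pt)=1$ and $\phi(\PP^1)=\Elog(\PP^1)=e(\PP^1)=1+uv$. The strict closed immersion $\{0\}\sqcup\{\infty\}\hookrightarrow\PP^1$ (everything with trivial log structure) together with the scissor relation then gives
\[\phi(\GG_m)=\phi(\PP^1)-2\phi(\pt)=uv-1\]
for $\GG_m$ with the trivial log structure.

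Next I would bring in the toric log structure. Write $\PP^1_{\mathrm{log}}$ for $\PP^1$ equipped with its divisorial log structure along $\{0,\infty\}$; it is a smooth projective toric variety, hence log smooth, so $\phi(\PP^1_{\mathrm{log}})=\Elog(\PP^1_{\mathrm{log}})=1+u$ by Example~\ref{ex:elogp1}. The open complement of the toric boundary carries the trivial log structure and is exactly the $\GG_m$ above, while each of the two boundary points, with its induced log structure, is the standard log point $P$, and $\{0\}\sqcup\{\infty\}\hookrightarrow\PP^1_{\mathrm{log}}$ is a strict closed immersion. The scissor relation therefore gives
\[1+u=\phi(\PP^1_{\mathrm{log}})=\phi(\GG_m)+2\phi(P)=(uv-1)+2\phi(P),\]
so $2\phi(P)=2+u-uv$. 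Since the coefficient of $u$ on the right is odd, $2+u-uv$ is not twice an element of $\ZZ[u,v]$, contradicting $\phi(P)\in\ZZ[u,v]$, which completes the argument.

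I expect the main — and rather mild — obstacle to be conceptual rather than computational: one must resist the temptation to assert $\phi(P)=\Elog(P)=1+u$ (as in Example~\ref{ex:elogp}), because $P$ is not log smooth over $\CC$ and the hypothesis does not pin down $\phi(P)$; the contradiction has to come instead from an integrality argument after extracting $2\phi(P)$. The remaining points — that the complement of the toric boundary in $\PP^1_{\mathrm{log}}$ really has trivial log structure, that the boundary points really acquire the standard log point structure, and that these inclusions are strict — are immediate from the definition of the divisorial log structure and require no real work.
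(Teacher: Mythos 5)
Your proof is correct and follows essentially the same route as the paper: both compare the two compactifications of $\GG_m$ inside $\PP^1$ with toric versus trivial log structure, extract $2\phi(P) = \Elog(\PP^1) - \Elog((\PP^1)^\circ) + 2\Elog(\pt)$, and conclude from the odd coefficient of $u$ that no $\phi(P) \in \ZZ[u,v]$ can exist. Your care in not assuming $\phi(P) = \Elog(P)$ matches the paper's logic exactly (and your constant term $2+u-uv$ looks right, where the paper's displayed $-2+u-uv$ appears to be a harmless sign slip).
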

\begin{proof}
    Note that $\PP^1, {\PP^1}^\circ$ and $\pt$ are all smooth, log smooth and projective. As \[\Elog(\PP^1) - \Elog({\PP^1}^\circ) + 2 \Elog(\pt) = -2 + u - uv\] is not divisible by $2$, there is no possible value of $\phi(P) \in \ZZ[u,v]$ such that $\Elog(\PP^1) - 2 \phi(P) = \Elog({\PP^1}^\circ) - 2 \Elog(\pt)$.
\end{proof}

The $\Elog$-polynomial does not induce a function 
\[\klvar{\CC} \to \ZZ[u, v].\]
Nevertheless, it sometimes satisfies invariance under log modifications. 

\begin{lemma}\label{lem:pfwdE'polys}
    Let $\pi : X \to Y$ be a morphism of log schemes with smooth, projective underlying schemes. If $\pi$ is log \'etale and $R\pi_* \OO_X \simeq \OO_Y$, the $\Elog$-polynomials agree
    \[\Elog(X) = \Elog(Y).\]
\end{lemma}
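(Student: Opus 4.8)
The plan is to deduce the equality of $\Elog$-polynomials from an equality of the underlying Hodge-type data. Since $\Elog(X) = \sum \dim_\CC H^q(X, \wedge^p \lkah{X}) u^p v^q$, it suffices to show that for each $p$ and $q$ there is an isomorphism $H^q(X, \wedge^p \lkah{X}) \simeq H^q(Y, \wedge^p \lkah{Y})$, or at least that the dimensions agree. The natural tool is the projection formula together with the Leray spectral sequence for $\pi$, so the first step is to understand $R\pi_* (\wedge^p \lkah{X})$.

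The key input is that $\pi$ is log \'etale, which gives an isomorphism $\pi^* \lkah{Y} \simeq \lkah{X}$ of sheaves of log differentials (this is the defining property of log \'etale morphisms: the relative log cotangent complex vanishes, so $\lkah{X/Y} = 0$ and the pullback map on log K\"ahler differentials is an isomorphism). Taking wedge powers, $\pi^* \wedge^p \lkah{Y} \simeq \wedge^p \lkah{X}$ for every $p$. Now I would apply the projection formula: since $\wedge^p \lkah{Y}$ is locally free on $Y^\circ$ (as $Y$ has smooth underlying scheme — here one should note $\lkah{Y}$ is locally free because $Y^\circ$ is smooth and the log structure is f.s., so wedge powers are too), we get
\[
R\pi_* (\wedge^p \lkah{X}) \simeq R\pi_*(\pi^* \wedge^p \lkah{Y}) \simeq \wedge^p \lkah{Y} \otimes^L_{\OO_Y} R\pi_* \OO_X \simeq \wedge^p \lkah{Y},
\]
where the last isomorphism uses the hypothesis $R\pi_* \OO_X \simeq \OO_Y$. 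The Leray spectral sequence then degenerates trivially and yields $H^q(X, \wedge^p \lkah{X}) \simeq H^q(Y, \wedge^p \lkah{Y})$ for all $p, q$, hence $\Elog(X) = \Elog(Y)$.

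The main obstacle is making the local-freeness and projection-formula steps precise in the logarithmic setting. One has to confirm that $\wedge^p \lkah{Y}$ is a perfect (in fact locally free) $\OO_Y$-module so that the projection formula $R\pi_*(\pi^* \mathcal{F} \otimes^L \mathcal{G}) \simeq \mathcal{F} \otimes^L R\pi_* \mathcal{G}$ applies with $\mathcal{F} = \wedge^p \lkah{Y}$ and $\mathcal{G} = \OO_X$; since $Y^\circ$ is a smooth variety and the log structure is fine and saturated, $\lkah{Y}$ is coherent and locally free (it is an extension of $\kah{Y^\circ}$-type pieces by the free sheaf $\OO_Y \otimes \overline{M}_Y^{\rm gp}$ appropriately), so this should go through. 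The other point requiring care is the identification $\pi^* \lkah{Y} \simeq \lkah{X}$: this is standard for log \'etale (or even just log smooth with $\lkah{X/Y}=0$) morphisms, see \cite[IV.3]{ogusloggeom}, but one should cite it explicitly. Once these two facts are in place, the argument is a formal consequence of the projection formula and the Leray spectral sequence, with no further computation needed.
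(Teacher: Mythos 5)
Your argument is correct and is essentially the same as the paper's proof: both use that log \'etaleness gives $\pi^*\lkah{Y}\simeq\lkah{X}$, take wedge powers, and apply the projection formula together with $R\pi_*\OO_X\simeq\OO_Y$ to get $R\pi_*\wedge^p\lkah{X}\simeq\wedge^p\lkah{Y}$. One small caveat: your parenthetical claim that $\lkah{Y}$ is locally free is not true for a general f.s.\ log structure on a smooth scheme (e.g.\ a divisorial log structure along a non-s.n.c.\ divisor), but it is coherent, hence perfect because $Y^\circ$ is smooth, and perfectness is all the projection formula requires --- which is exactly the justification the paper uses.
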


\begin{proof}
Because $X \to Y$ is log étale, pullback equates the log Kähler differentials
\[\pi^*\lkah{Y} \simeq \lkah{X}.\]
The same goes for their exterior powers. Coherent sheaves such as $\lkah{X}, \lkah{Y}$ are perfect, as $X, Y$ have smooth underlying schemes. Then we can apply the projection formula \cite[0B54]{sta}:
\[R\pi_* \wedge^p \lkah{X} = R \pi_* \pi^* \wedge^p \lkah{Y} = \wedge^p \lkah{Y} \otimes R \pi_* \pi^* \OO_Y \cong \wedge^p \lkah{Y}.\]
The (cohomology of) wedge products of $\lkah{X}$ and $\lkah{Y}$ agree, so the $\Elog$ polynomials of $X$ and $Y$ agree.

\end{proof}

\begin{corollary}\label{cor:E'polyslogmodlogsmooth}
    Let $\pi : \tilde X \to X$ be a log modification with $X, \tilde{X}$ smooth, log smooth, projective. 
    
    Then the $\Elog$-polynomials agree \[\Elog(\tilde X) = \Elog(X).\]
\end{corollary}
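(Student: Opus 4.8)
The plan is to reduce Corollary~\ref{cor:E'polyslogmodlogsmooth} to Lemma~\ref{lem:pfwdE'polys} by verifying its two hypotheses for a log modification $\pi : \tilde X \to X$ between smooth, log smooth, projective log schemes. The lemma requires that $\pi$ is log \'etale and that $R\pi_* \OO_{\tilde X} \simeq \OO_X$, so these are the two points to establish.

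First I would recall that any log modification is log \'etale: by definition there is a log blowup $\hat X \to \tilde X$ with $\hat X \to \tilde X \to X$ a log blowup, and log blowups are proper, log \'etale log monomorphisms (as stated in the Conventions). Since $\hat X \to X$ and $\hat X \to \tilde X$ are both log \'etale and log \'etale morphisms satisfy the usual cancellation property (if $g \circ f$ and $g$ are log \'etale then so is... wait, we need the other direction), I would instead argue directly: $\hat X \to \tilde X$ log \'etale and $\hat X \to X$ log \'etale, and since $\hat X \to \tilde X$ is in particular surjective (being a log blowup, hence proper and an isomorphism over a dense open, in fact surjective), log \'etaleness of $\tilde X \to X$ follows from log \'etaleness of the composite by descent along the surjective log \'etale map $\hat X \to \tilde X$. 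Thus $\pi$ is log \'etale.

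Second, for $R\pi_* \OO_{\tilde X} \simeq \OO_X$: since $X$ and $\tilde X$ have smooth underlying schemes, and $\pi$ is a log modification hence proper and birational on underlying schemes (a log blowup is a blowup in the s.n.c.\ or toric case, hence birational and proper; a log modification is dominated by such), the map $\pi^\circ : \tilde X^\circ \to X^\circ$ is a proper birational morphism of smooth varieties. For such a morphism one has $R\pi^\circ_* \OO_{\tilde X^\circ} \simeq \OO_{X^\circ}$, by the standard fact that rational singularities are preserved and smooth varieties have rational singularities (equivalently, by Hironaka / the theorem on vanishing of higher direct images under proper birational maps from a smooth source to a smooth target over a field of characteristic zero). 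Since $\CC$ has characteristic zero, this applies.

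Given both hypotheses, Lemma~\ref{lem:pfwdE'polys} immediately yields $\Elog(\tilde X) = \Elog(X)$. The main obstacle is the second point: one must be careful that "log modification" really does give a proper birational morphism of underlying schemes, and invoke the correct vanishing statement $R\pi^\circ_*\OO \simeq \OO$ for proper birational maps between smooth complex varieties; the log \'etale cancellation in the first point is comparatively routine once the definitions of log modification and log blowup are unwound.

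\begin{proof}
By definition of a log modification, there is a log blowup $\hat X \to \tilde X$ such that $\hat X \to \tilde X \to X$ is a log blowup. Log blowups are log \'etale, and $\hat X \to \tilde X$ is in particular surjective; since $\hat X \to X$ is log \'etale, it follows that $\pi : \tilde X \to X$ is log \'etale.

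For the second hypothesis of Lemma~\ref{lem:pfwdE'polys}, note that the underlying morphism $\pi^\circ : \tilde X^\circ \to X^\circ$ is proper and birational: log blowups induce blowups on underlying schemes in the relevant local models, hence proper birational maps, and $\pi^\circ$ is dominated by such a map. As $X^\circ$ and $\tilde X^\circ$ are smooth over $\CC$, a proper birational morphism between them satisfies $R\pi^\circ_* \OO_{\tilde X^\circ} \simeq \OO_{X^\circ}$ (smooth varieties over a field of characteristic zero have rational singularities).

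Both hypotheses of Lemma~\ref{lem:pfwdE'polys} are satisfied, so $\Elog(\tilde X) = \Elog(X)$.
\end{proof}
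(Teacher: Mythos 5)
Your reduction to Lemma~\ref{lem:pfwdE'polys} is the same as the paper's, but you verify the hypothesis $R\pi_*\OO_{\tilde X}\simeq\OO_X$ by a genuinely different route. The paper works strict-\'etale locally on $X$, exhibits $\pi$ as the pullback of a representable proper morphism of Artin fans $\scr B\to\scr C$ along a strict smooth map $X\to\scr C$, quotes the combinatorial pushforward $R\tau_*\OO_{\scr B}\simeq\OO_{\scr C}$, and concludes by cohomology and base change; this argument is characteristic-free and is the template reused for the harder non-birational case (Proposition~\ref{prop:blowupEpolys}, via Theorem~\ref{thm:rothemail}). You instead observe that $\pi^\circ$ is a proper birational morphism of smooth complex varieties and invoke the characteristic-zero fact that such morphisms satisfy $Rf_*\OO=\OO$ (smooth varieties have rational singularities). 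Since Section~\ref{sec:hodge} works over $\CC$, this is legitimate and arguably more elementary; what it buys is brevity, at the cost of positive-characteristic generality and of any hope of extending to the settings where $\pi^\circ$ is not birational.

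One step deserves more care than you give it: the claim that $\pi^\circ$ is birational. For a general log scheme a log blowup is \emph{not} birational on underlying schemes --- the log blowup of the rank-two log point has underlying scheme $\PP^1$ over a point --- so ``log blowups induce blowups in the relevant local models'' is exactly the assertion that needs the hypotheses. What makes it true here is that a log smooth log scheme over $\CC$ with smooth underlying scheme has divisorial log structure given by an s.n.c.\ boundary (\'etale locally an s.n.c.\ pair), and for s.n.c.\ pairs log blowups are iterated scheme-theoretic blowups in boundary strata, hence proper and birational; applying this to both $\hat X\to X$ and $\hat X\to\tilde X$ gives that $\pi^\circ$ is proper birational. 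With that point made explicit (and the routine check that log modifications are log \'etale, which the paper takes for granted), your proof is correct.
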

\begin{proof}
    It suffices to show $R\pi_* \OO_{\tilde X} \simeq \OO_X$. This strict-\'etale local in $X$. After passing to a strict-\'etale neighborhood, one can find a pullback square
    \[
    \begin{tikzcd}
        \tilde X \ar[r] \ar[d] \lpbstrict        &\scr B \ar[d, "\tau"]         \\
        X \ar[r]       &\scr C
    \end{tikzcd}    
    \]
    with $X \to \scr C$ strict and $\scr B \to \scr C$ a representable, proper morphism of Artin fans.  

    The structure sheaf pushes forward along $\tau$
    \[R\tau_* \OO_{\scr B} \simeq \OO_{\scr C}\]
    as in \cite[Lemma 2.1]{kthylogprodfmla}. The map $X \to \scr C$ is smooth, as it is log smooth and strict. Cohomology and base change then ensures $R\pi_* \OO_{\tilde X} \simeq \OO_X$. 
\end{proof}

\begin{proposition}\label{prop:blowupEpolys}

Let $X = (X, \NN^k)$ be a free constant log scheme over $\Spec \CC^\circ$ and $\tilde X \to X$ a log blowup factoring through the blowup at $(e_1, e_2, \cdots, e_k) \subseteq \NN^k$. If $X^\circ, \tilde X^\circ$ are smooth and projective, their $\Elog$-polynomials coincide:
\[\Elog(\tilde X) = \Elog(X).\]    
\end{proposition}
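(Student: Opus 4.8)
The plan is to reduce to Lemma~\ref{lem:pfwdE'polys} and then run the Artin fan argument from the proof of Corollary~\ref{cor:E'polyslogmodlogsmooth}; the only new point is that the log smoothness hypothesis used there can be traded for the much weaker smoothness of the underlying scheme, once one knows the Artin fan of a constant free log scheme is particularly simple.

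First I would note that $\pi : \tilde X \to X$ is a log blowup, hence proper and log \'etale, and that by hypothesis $\tilde X^\circ$ and $X^\circ$ are smooth and projective. By Lemma~\ref{lem:pfwdE'polys} it therefore suffices to prove
\[
R\pi_*\OO_{\tilde X}\simeq\OO_X
\]
(an isomorphism of the underlying schemes' structure sheaves). Only the fact that $\pi$ is a log blowup will be used; the specific factorisation through the blowup at $(e_1,\dots,e_k)$ plays no further role.

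Next, exactly as in the proof of Corollary~\ref{cor:E'polyslogmodlogsmooth}, I would observe that $R\pi_*\OO_{\tilde X}\simeq\OO_X$ can be checked strict-\'etale locally on $X$, and that $\pi$ is the pullback of a representable, proper subdivision $\tau : \scr B \to \scr C$ of the Artin fan $\scr C$ of $X$ (this uses only that log blowups are combinatorial, i.e.\ determined by subdivisions of the cone complex of $X$), fitting in a cartesian square
\[
\begin{tikzcd}
\tilde X \ar[r]\ar[d,"\pi"'] \lpbstrict & \scr B \ar[d,"\tau"]\\
X \ar[r] & \scr C .
\end{tikzcd}
\]
\'Etale-locally $\tau$ is a toric modification, so $R\tau_*\OO_{\scr B}\simeq\OO_{\scr C}$ by \cite[Lemma 2.1]{kthylogprodfmla}. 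Here is the only place the argument leaves the path of Corollary~\ref{cor:E'polyslogmodlogsmooth}: instead of invoking log smoothness of $X$ to conclude that $X\to\scr C$ is smooth, I would use that $X=(X^\circ,\NN^k)$ is constant free, so that its Artin fan is $\scr C\cong B\GG_m^k$ carrying the rank-$k$ constant log structure; since every morphism to $B\GG_m^k$ over $\Spec\CC$ is flat ($\GG_m^k$-torsors are faithfully flat, and $X^\circ\to\Spec\CC$ is flat), the map $X\to\scr C$ is flat. Cohomology and base change along the square then give $R\pi_*\OO_{\tilde X}\simeq\OO_X$, and Lemma~\ref{lem:pfwdE'polys} yields $\Elog(\tilde X)=\Elog(X)$.

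The step I expect to require the most care is the passage to the Artin fan: pinning down that the Artin fan of a constant free log scheme is $B\GG_m^k$ and that $X\to\scr C$ is flat — this is precisely what replaces the log smoothness input — and confirming that the ``pulled back from the Artin fan'' description applies to an arbitrary log blowup of $X$, not only to the blowup at $(e_1,\dots,e_k)$. A more elementary variant that avoids Artin fans altogether: show that strict-\'etale locally on $X^\circ$ the morphism $\pi^\circ$ is a projection $U\times_{\Spec\CC}T\to U$ with $T$ a smooth projective toric variety, so that by flat base change $R\pi^\circ_*\OO_{U\times T}\simeq\OO_U\otimes_\CC R\Gamma(T,\OO_T)\simeq\OO_U$, using $H^{>0}(T,\OO_T)=0$; the work there is to verify that this local product structure with toric fibre survives the iterated log blowup.
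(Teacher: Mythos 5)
Your reduction to $R\pi_*\OO_{\tilde X}\simeq\OO_X$ via Lemma~\ref{lem:pfwdE'polys} matches the paper, and the idea of base changing along a flat map from $X$ to a one-point object is also the paper's (it uses the strict map $X\to Q$ to a log point and flat base change, rather than the Artin fan). But there is a genuine gap at the step where you invoke \cite[Lemma 2.1]{kthylogprodfmla} to get $R\tau_*\OO_{\scr B}\simeq\OO_{\scr C}$. Since $X$ is constant, its Artin fan is $\scr C\cong B\GG_m^k=[\vec 0/\GG_m^k]$, the \emph{closed point} of $[\Aff^k/\GG_m^k]$, and the corresponding $\scr B$ is $[B_0/\GG_m^k]$ where $B_0$ is the fiber over the origin of the subdivision $B\to\Aff^k$. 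The cited lemma (as used in Corollary~\ref{cor:E'polyslogmodlogsmooth}) concerns subdivisions of Artin fans themselves, where $R\tau_*\OO=\OO$ is the standard toric statement for $B\to\Aff^k$; it does not give the statement for the restriction to the non-flat closed substack $B\GG_m^k\hookrightarrow[\Aff^k/\GG_m^k]$. What you actually need is $R\Gamma(B_0,\OO_{B_0})=\CC$, and $B_0$ is in general a non-reduced union of toric varieties (see Karl Schwede's example in Appendix~\ref{appendix:rothemail}), not an Artin fan and not a smooth projective toric variety --- which also invalidates your ``more elementary variant'' ending with $H^{>0}(T,\OO_T)=0$. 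This vanishing for $B_0$ is precisely the hard content of the proposition; the paper isolates it as Theorem~\ref{thm:rothemail}.

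Relatedly, your remark that ``the specific factorisation through the blowup at $(e_1,\dots,e_k)$ plays no further role'' discards exactly the hypothesis the paper uses to prove that vanishing: Theorem~\ref{thm:rothemail} pulls back the exceptional Cartier divisor $\PP^{k-1}$ of $\mathrm{Bl}_{\vec 0}\Aff^k$ via Lemma~\ref{lem:derpfwdstrsheafdivisor} to control the non-reduced structure of $B_0$. Without some argument handling $R\Gamma(B_0,\OO_{B_0})$ directly, the proof is incomplete regardless of which flat base (log point or $B\GG_m^k$) you work over.
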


\begin{proof}[{Proof of Proposition \ref{prop:blowupEpolys}}]

We show $R\pi_* \lkah{\tilde X} \overset{?}{=} \lkah{X}$. We can assume $X$ is connected and thus atomic. Let $Q$ be a log point with underlying scheme $\Spec \CC$ and a strict map $X \to Q$. The log blowup $\pi$ is pulled back from a log blowup $\tau$ of the log point
\[
\begin{tikzcd}
\tilde X \ar[d, "\pi", swap]  \ar[r] \lpbstrict       &\tilde Q \ar[d, "\tau"]      \\
X \arrow[r]        &Q.
\end{tikzcd}
\]
The pullback $\tilde X$ is the product of $X$ and $\tilde Q$. All the maps in the square are flat. 

If we show $R\pi_* \OO_{\tilde X} = \OO_X$, we can apply Lemma \ref{lem:pfwdE'polys}. Theorem \ref{thm:rothemail} in the appendix asserts
\begin{equation}\label{eqn:pfwdstrsheafpoint}
R\tau_* \OO_{\tilde Q} = \OO_Q.   
\end{equation}
By cohomology and base change, we are done.

\end{proof}

\begin{definition}
    
Write $\Elogbar(X)$ for the image of the polynomial $\Elog(X)$ under the ring homomorphism 
\begin{align*}
    \ZZ[u, v] &\to \ZZ[u] \times \ZZ[v]\\
            v &\mapsto (-1,v)\\
            u &\mapsto (u,0)\\
\end{align*}

Define 
\begin{align*}
\Elogbar_1(X) \coloneqq \Elog(X)(u,-1)\\ 
\Elogbar_2(X) \coloneqq \Elog(X)(0,v),
\end{align*}
or equivalently
\[\Elogbar(X) = (\Elogbar_1(X), \Elogbar_2(X)) \coloneqq \left(
\sum \chi(\wedge^p \lkah{X}) u^p, \quad 
\sum \dim H^q(\OO_X) v^q
\right) \qquad 
\in \ZZ[u] \times \ZZ[v]\] 
as the generating functions of the Euler characteristics of the wedges $\wedge^p \lkah{X}$ and the Betti numbers. 

\end{definition}

We construct maps out of $\klvar{k}$ which restrict to $\Elogbar_1, \Elogbar_2$ on nice log schemes with smooth, projective underlying scheme.

\subsection{The cohomology of the structure sheaf}\label{ss:cohomstrsheaf}
For a log scheme $X$ with smooth, projective underlying scheme, the polynomial $\Elogbar_2(X)$ coincides with the substitution $u = 0$ of the usual $e-$polynomial
\[\Elogbar_2(X) = e(X^\circ)(0, v).\]
Write $b$ for the composite 
\[b : \ordkvar{\CC}[P] \overset{P \mapsto \Spec \CC}{\longrightarrow} \ordkvar{\CC} \overset{e|_{u = 0}}{\longrightarrow} \ZZ[v].\]

\begin{proposition}
    The map $b$ factors through the quotient $\ordkvar{\CC}[P] \to \klvar{\CC}$.  
\end{proposition}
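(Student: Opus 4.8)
The plan is to show that the composite $b : \ordkvar{\CC}[P] \to \klvar{\CC} \to \ZZ[v]$, which by Theorem~\ref{thm:vogelpresentation} means $\ZZ[v]$, factors through the quotient by the single relation $P(P+[\GG_m])$. Since $\ordkvar{\CC}[P] \twoheadrightarrow \klvar{\CC}$ has kernel generated by $P(P+[\GG_m])$, it suffices to check that $b(P(P+[\GG_m])) = 0$ in $\ZZ[v]$. By definition $b$ is the composite $\ordkvar{\CC}[P] \xrightarrow{P \mapsto [\Spec\CC]} \ordkvar{\CC} \xrightarrow{e|_{u=0}} \ZZ[v]$, so $b(P) = (e|_{u=0})([\Spec\CC]) = 1$, and $b([\GG_m]) = (e|_{u=0})([\GG_m]) = e(\GG_m)(0,v)$. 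Now $e(\GG_m) = uv - 1$, so setting $u = 0$ gives $b([\GG_m]) = -1$. Therefore $b(P(P + [\GG_m])) = 1 \cdot (1 + (-1)) = 0$, and $b$ kills the relation, hence factors through $\klvar{\CC}$.

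The only slightly delicate point is to make sure that the hypotheses under which the factorization is asserted are exactly met: we are only claiming a factorization of the \emph{ring} (or group) homomorphism $b$ through the \emph{ring} homomorphism $\ordkvar{\CC}[P] \to \klvar{\CC}$ of Theorem~\ref{thm:vogelpresentation}, not any compatibility with $\Elogbar_2$ on log schemes. So the argument is purely formal once the value $e(\GG_m) = uv-1$ is recalled; I would state this computation (the $e$-polynomial of $\GG_m$ is $uv - 1$ since $\GG_m = \Aff^1 \setminus \{0\}$ and $e(\Aff^1) = uv$, $e(\{0\}) = 1$) and then conclude.

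I do not expect any real obstacle here: the main content is simply that the relation $P^2 + P[\GG_m]$ maps to $1 + 1 \cdot (-1) = 0$ under $b$, which works precisely because $\chi(\GG_m) = 0$ is visible already at the level of $e(\GG_m)(0,v) = -1$ together with $b(P) = 1$. One could phrase it even more cleanly: $b$ agrees with $\chi^{\rm log}$-type bookkeeping in the variable $v$ only through $P \mapsto 1$, and $(1)(1 + (-1)) = 0$. I would write the proof in two lines: recall $b(P) = 1$ and $b([\GG_m]) = e(\GG_m)(0,v) = -1$; conclude $b$ annihilates $P(P+[\GG_m])$ and hence descends.
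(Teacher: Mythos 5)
Your proof is correct and takes essentially the same route as the paper: reduce to checking that $b$ annihilates the generator $P(P+[\GG_m])$ of the kernel of $\ordkvar{\CC}[P]\to\klvar{\CC}$, then compute $b(P)=1$ and $b([\GG_m])=e(\GG_m)(0,v)=-1$. The paper phrases this marginally more economically by observing that already $b(P+[\GG_m])=0$, but the content is identical.
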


\begin{proof}
    We need only check $P(P + [\GG_m])$ maps to zero. In fact, $b$ sends $P + [\GG_m]$ to zero, as $e|_{u = 0}(\Spec \CC) = 1$ and $e|_{u = 0}(\GG_m) = (uv-1)|_{u = 0} = -1$.
\end{proof}

This defines a log motivic invariant which restricts to $\Elogbar_2$ on log schemes with smooth, projective underlying scheme.

\subsection{The log $\chi_y$-genus}

    The alternating sum of the Euler characteristics of the usual K\"ahler differentials is called the $\chi_y$-genus
    \[\chi_y(X) \coloneqq \sum (-1)^p \chi(\wedge^p \kah{X}) y^p.\]
    The $\Elogbar_1$-polynomial defined here is a ``log $\chi_{-y}$-genus,'' where we replace $y$ by $-y$. 

    If $X = (X, D)$ is an s.n.c.~pair of dimension $\dim X = n$ with $X$ proper, \cite[Proposition 3.1]{grossrefinedtropicalization} relates our $\Elogbar_1$-polynomial to the $\chi_y$-genus of $X \setminus D$, after a change of coordinates:
    \begin{align}
    \chi_y(X \setminus D) &= \sum (-1)^{n-r} \chi(\wedge^{n-r} \lkah{X}) y^r,    \\
    (-u)^n \cdot \chi_{-\frac{1}{u}}(X \setminus D) &=\Elogbar_1(X) \coloneqq \sum \chi(\wedge^p \lkah{X}) u^p \label{eq:chiyelogbar1}.
    \end{align}

\begin{remark}
    This equality shows $\Elogbar_1(X)$ only depends on the interior $X \setminus D$ where the log structure is trivial for s.n.c.~pairs. In particular, this implies invariance under log blowups of s.n.c.~pairs. It does not imply $\Elogbar_1(P) = 0$, as we still have $\Elogbar_1(P) = 1 + u$. 
\end{remark}

We will now build a ring homomorphism out of the log Grothendieck ring and show it agrees with $\Elogbar_1$ for a large collection of log schemes. 

\begin{definition}
    Let $\t$ be the composite
    \[\klvar{\CC} \overset{\rho}{\longrightarrow} \ordkvar{\CC} \overset{e}{\longrightarrow} \ZZ[u, v]\]
    of the log Hodge map $\rho$ (Theorem \ref{thm:vogelpresentation}) with the usual $e$-polynomial. Write $\tbar$ for the composite of $\t$ with the map
    \[\ZZ[u, v] \longrightarrow \dfrac{\ZZ[u, v]}{uv + u} \to \ZZ[u] \times \ZZ[v]\]
    
    and $\tbar = (\tbar_1, \tbar_2)$. 
\end{definition}

\begin{example}
If $(X, D)$ is an s.n.c.~divisor with components $D_i \subseteq D$, let $D_r$ be the union of the closed strata $\bigcap_I D_i$ with $\# I = r$. For example, 
\[D_0 = X, \quad D_1 = D, \quad D_2 = \bigcup_{i \neq j} (D_i \cap D_j), \cdots.\]
The log Hodge map sends the divisorial log structure $X = (X, D)$ to the alternating sum
\[\rho(X) = \sum_{r \in \NN} (-[\GG_m])^r [D_r^\circ \setminus D^\circ_{r+1}] = [X^\circ \setminus D^\circ] - [\GG_m] \cdot [D^\circ \setminus D^\circ_2] + \cdots.\]

The map $\t$ sends $P$ to the $e$-polynomial of $-[\GG_m]$
\[\t(P) = 1-uv.\]
So $\t$ is computed by
\[
\t(X) = \sum_k (1-uv)^r e(D_k^\circ \setminus D_{r+1}^\circ).
\]

\end{example}

We can now identify $\Elogbar$ and $\tbar$ in two nice cases. 

\begin{theorem}\label{thm:E=t}
    The two maps $\Elogbar, \tbar$ agree on constant, free log schemes $(X, \NN^r)$ with smooth, projective underlying scheme $X$. 
\end{theorem}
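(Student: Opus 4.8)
The plan is to compute both $\Elogbar$ and $\tbar$ explicitly on a constant free log scheme $(X,\NN^r)$ and observe that they coincide; the content of the theorem is precisely that the combinatorial factor $(1+u)^r$ that appears in $\Elog$ of such a scheme matches the image of $P^r$ under the map $\t$.

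\emph{The class in $\klvar{\CC}$ and the value of $\tbar$.} First I would note that $(X,\NN^r)$ is the product $X\times P^r$ in $\logvar{\CC}$, so $[(X,\NN^r)]=[X]\,P^r$ in $\klvar{\CC}$. Applying $\rho$ and then the $e$-polynomial, and using $\t(P)=e(-[\GG_m])=1-uv$, gives
\[
\t\big([(X,\NN^r)]\big)=e(X)\,(1-uv)^r .
\]
Since $1-uv\equiv 1+u \pmod{uv+u}$, passing to $\ZZ[u]\times\ZZ[v]$ via $v\mapsto(-1,v)$, $u\mapsto(u,0)$ yields
\[
\tbar_1\big([(X,\NN^r)]\big)=(1+u)^r\,e(X)(u,-1),\qquad \tbar_2\big([(X,\NN^r)]\big)=e(X)(0,v).
\]

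\emph{The $\Elog$-polynomial.} Next I would compute $\lkah{(X,\NN^r)}$. For a constant log scheme the log differentials split canonically: the map to the constant log point $(\Spec\CC,\underline{\NN^r})$ is strict, so relative log differentials recover $\kah{X}$, while the $\underline{\NN^r}$-summand of the log structure contributes a free part $\OO_X^{\oplus r}$ spanned by the $d\log$ of the generators, whose structure relations are trivial; concretely $\lkah{(X,\NN^r)}\cong\kah{X}\oplus\OO_X^{\oplus r}$ (compare \cite[IV.1]{ogusloggeom}). Taking exterior powers,
\[
\wedge^p\lkah{(X,\NN^r)}\;\cong\;\bigoplus_{i+j=p}\wedge^i\kah{X}\otimes\textstyle\bigwedge^j\OO_X^{\oplus r}\;=\;\bigoplus_{i+j=p}\big(\wedge^i\kah{X}\big)^{\oplus\binom{r}{j}},
\]
and since $X$ is smooth and projective one has $h^{i,q}(X)=\dim H^q(\wedge^i\kah{X})$, so that
\[
\Elog\big((X,\NN^r)\big)=\sum_{p,q}\Big(\sum_{i+j=p}\binom{r}{j}h^{i,q}(X)\Big)u^pv^q=\Big(\sum_{j}\binom{r}{j}u^j\Big)e(X)=(1+u)^r\,e(X).
\]

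\emph{Conclusion.} Substituting $v=-1$ and $u=0$ into the identity $\Elog\big((X,\NN^r)\big)=(1+u)^re(X)$ gives exactly $\Elogbar_1\big((X,\NN^r)\big)=(1+u)^re(X)(u,-1)$ and $\Elogbar_2\big((X,\NN^r)\big)=e(X)(0,v)$, which are the formulas obtained above for $\tbar_1$ and $\tbar_2$; hence $\Elogbar=\tbar$ on all constant free log schemes with smooth projective underlying scheme. The only mildly technical step is the canonical splitting of $\lkah{(X,\NN^r)}$ together with the binomial bookkeeping in the exterior powers, but I expect no genuine obstacle here — everything else is formal manipulation in $\klvar{\CC}$ and in $\ZZ[u,v]$.
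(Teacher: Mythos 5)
Your proposal is correct and follows essentially the same route as the paper's own proof: both use the identity $[(X,\NN^r)]=[X^\circ]P^r$ together with $\tbar(P)=1-uv\equiv 1+u \pmod{uv+u}$ on one side, and the splitting $\lkah{(X,\NN^r)}\cong\kah{X}\oplus\OO_X^{\oplus r}$ (the paper cites \cite[Proposition IV.1.2.15]{ogusloggeom} for this) with the binomial computation of exterior powers to get $\Elog=(1+u)^r e(X^\circ)$ on the other. No gaps.
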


\begin{proof}
There is a pullback square
\begin{equation}\label{eqn:strpbsquareloghodge}
\begin{tikzcd}
X \ar[r] \ar[d] \lpbstrict       &P_r \ar[d]        \\
X^\circ \ar[r]         &\pt,
\end{tikzcd}
\end{equation}
where $P_r$ is the point with rank-$r$ log structure. The resulting equality of classes $[X] = [X^\circ] P^r$ identifies the $\tbar$-polynomial
\[\tbar(X) = \tbar([X^\circ]) \tbar(P)^r.\]
The $\tbar$-polynomial of $P$ is defined as
\[\tbar(P) \coloneqq e(-[\GG_m]) = 1-uv \mod u + uv\] and hence is equal to $\Elogbar(P) = 1 + u \mod u + uv$.

By \cite[Proposition IV.1.2.15]{ogusloggeom}, the pullback square \eqref{eqn:strpbsquareloghodge} decomposes the log K\"ahler differentials of $X$ as
\[\lkah{X} = \lkah{P_r}|_X \oplus \kah{X^\circ}.\]
The log K\"ahler differentials of the point $P_r$ are the trivial bundle $\lkah{P_r} = \OO_{P_r}^{\oplus r}$ of rank $r$. 

The exterior algebra is then
\[
\wedge^* \lkah{X} = \wedge^* \OO^r_X \otimes \wedge^* \kah{X^\circ}.
\]
We have $\wedge^i \OO_X^r = \OO_X^{\binom{r}{i}}$ for $0 \leq i \leq r$, and hence
\[
\wedge^p \lkah{X} = \bigoplus_{i + j = p} \left(\wedge^j \kah{X^\circ}\right)^{\oplus \binom{r}{i}}.
\]

The log Hodge numbers are
\[
\dim H^q(\wedge^p \lkah{X}) = 
\sum_{i + j = p} \dbinom{r}{i} h^{j, q}(X^\circ),
\]
where $h^{p, q}$ are the usual Hodge numbers of the underlying scheme $X^\circ$. The generating function $\Elog$ for the log Hodge numbers then equals
\[
\Elog(X) = 
\sum_{p, q} \left(\sum_{i + j = p} \dbinom{r}{i} h^{j, q}(X^\circ)\right) u^p v^q = e(X^\circ)(1+u)^r.
\]

We obtain
\begin{align*}
    \Elogbar(X) &= e(X^\circ)(1+u)^r \mod u + uv       \\
                &= \tbar([X^\circ])\tbar(P)^r      \\
                &= \tbar(X).       
\end{align*}
\end{proof}

\begin{theorem}\label{thm:E=tlogsmoothandreasgross}
    The equality $\Elogbar(X) = \tbar(X)$ holds when $X$ is log smooth and has smooth, projective underlying scheme. 
\end{theorem}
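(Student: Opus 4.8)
The plan is to reduce the claim to the case already handled in Theorem~\ref{thm:E=t} (constant free log schemes) by using log blowups and the invariance results established earlier. Concretely, I would argue as follows. Given $X$ log smooth with smooth projective underlying scheme, I first want to replace $X$ by a log blowup $\tilde X$ that is ``nicer'' — ideally locally constant free — while controlling both sides of the equality. On the $\tbar$ side this is free: $\tbar$ factors through $\klvar{\CC}$ by construction, so $\tbar(\tilde X) = \tbar(X)$ for any log blowup. On the $\Elogbar$ side I would invoke Corollary~\ref{cor:E'polyslogmodlogsmooth}: if the log blowup can be chosen so that $\tilde X$ is again smooth, log smooth, and projective, then $\Elog(\tilde X) = \Elog(X)$ and hence $\Elogbar(\tilde X) = \Elogbar(X)$. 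So it suffices to prove the theorem after such a log blowup.

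Next I would use the structure theory of log smooth schemes. Since $X$ is log smooth over $\CC$ with trivial log structure on the base, étale-locally $X$ looks like (an open in) a toric variety, so $X$ admits a log blowup making it log smooth with a \emph{free} characteristic sheaf; stratifying by the rank of $\bar M_X$ gives a locally closed decomposition into pieces that are smooth and carry a free constant log structure (cf.\ Proposition~\ref{prop:niceblowup} and Corollary~\ref{cor:locconstequivconst}). The subtlety is that $\Elogbar$ does \emph{not} a priori satisfy the strict scissor relations, so I cannot simply cut $X$ into strata and sum; this is exactly the phenomenon Proposition~\ref{prop:nologhodge} warns about. To get around this I would instead keep $X$ global and project: after the log blowup, $X$ fibers (strict-étale locally) over an Artin fan, and more usefully, because $X$ is log smooth there is a log modification of $X$ that is a strict morphism to a \emph{smooth} toric stack / admits a strict smooth map $X^\circ \to \pt$ realizing the log structure as pulled back fibrewise. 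The cleanest route: reduce to the case where $X$ is an iterated projective bundle / blowup situation built from a constant free log scheme, and apply Theorem~\ref{thm:E=t} together with Lemma~\ref{lem:pfwdE'polys} and Proposition~\ref{prop:blowupEpolys} to propagate the equality $\Elogbar = \tbar$ through each log-étale step with $R\pi_*\OO = \OO$.

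More precisely, the argument I would write: choose a log blowup $\tilde X \to X$ such that $\tilde X$ is log smooth, projective, smooth, and such that Zariski-locally $\tilde X = (U, \NN^r)$ is constant free — this is possible by toric resolution applied fibrewise over the (automatically sub­divided) Artin fan, using that $X$ is log smooth so the fibres are toric. By Corollary~\ref{cor:E'polyslogmodlogsmooth}, $\Elogbar(\tilde X) = \Elogbar(X)$, and since $\tbar$ is a $\klvar{\CC}$-invariant, $\tbar(\tilde X) = \tbar(X)$. Now on $\tilde X$ I can run the computation of Theorem~\ref{thm:E=t} locally: the decomposition $\lkah{\tilde X} = \lkah{P_r}|_{\tilde X} \oplus \kah{\tilde X^\circ}$ via \cite[Proposition IV.1.2.15]{ogusloggeom} is still valid (it only needs the strict square), giving $\wedge^p \lkah{\tilde X} = \bigoplus_{i+j=p} (\wedge^j \kah{\tilde X^\circ})^{\oplus\binom{r}{i}}$ fibrewise — but now $r$ varies over the scheme, so I must instead argue on the locally closed strata $\tilde X_r$ of constant rank. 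Here I would use Corollary~\ref{cor:locconstequivconst} / the class identity in $\klvar{\CC}$ to compute $\tbar(\tilde X) = \sum_r \tbar(\tilde X_r)\,\tbar(P)^r = \sum_r e(\tilde X_r^\circ)(1+u)^r \bmod (u+uv)$, and I would need a matching cohomological computation of $\Elogbar(\tilde X)$. That cohomological matching is the crux.

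The main obstacle, then, is precisely that $\Elogbar$ is \emph{not} additive in strict scissor relations, so I cannot glue the strata-wise computations of Theorem~\ref{thm:E=t} into a global statement for $\tilde X$ by naive additivity. The honest way to overcome this is a spectral-sequence / filtration argument: the log structure on the log smooth $\tilde X$ is ``divisorial up to log blowup,'' i.e.\ after a further log blowup $\tilde X$ is an s.n.c.\ pair $(\bar X, D)$, and then I can use the explicit description of $\lkah{\bar X}$ together with the logarithmic Atiyah--Hotta / residue exact sequences $0 \to \Omega_{\bar X}^p \to \lkah{\bar X}^{\wedge p} \to \bigoplus \lkah{}^{\wedge p-1}|_{D_i} \to \cdots$ to compute $\sum_q (-1)^q \dim H^q(\wedge^p \lkah{\bar X})$ as an alternating sum over strata of $D$ — which is genuinely additive because it is an Euler-characteristic computation, not a Hodge-number one. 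That alternating sum is exactly $\rho([\bar X])$ composed with $e$ and then reduced mod $u+uv$, i.e.\ $\tbar(\bar X)$. Tracking that $\bar X$, $\tilde X$, $X$ all have the same class in $\klvar{\CC}$ (common log blowups) and the same $\Elogbar$ (Corollary~\ref{cor:E'polyslogmodlogsmooth} applied twice) then closes the loop. So the skeleton is: (1) reduce to an s.n.c.\ pair by two log blowups, using Corollary~\ref{cor:E'polyslogmodlogsmooth} to preserve $\Elogbar$ and $\klvar{\CC}$-invariance to preserve $\tbar$; (2) compute $\Elogbar_1$ via the log residue sequence, obtaining an honest alternating sum over $D$-strata; (3) identify that sum with $\tbar$ using the Example computation of $\rho$ on divisorial log structures; (4) handle $\Elogbar_2 = e(X^\circ)(0,v)$ separately, which is immediate since $\Elog(X)(0,v) = \dim H^q(\OO_{X^\circ})$ is a genuine birational-under-log-blowup invariant matching $\tbar_2$.
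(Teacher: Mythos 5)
Your final skeleton is essentially the paper's proof: reduce to an s.n.c.\ pair by log blowups (using Corollary~\ref{cor:E'polyslogmodlogsmooth} for $\Elogbar$ and $\klvar{\CC}$-invariance for $\tbar$), handle $\Elogbar_2$ via \S\ref{ss:cohomstrsheaf}, and compute $\Elogbar_1$ through the log residue sequence, whose Euler-characteristic additivity is exactly what circumvents the failure of strict scissor relations that you correctly flag as the crux. The only presentational difference is that the paper implements your ``alternating sum over $D$-strata'' as an induction on $\dim X$ and the number of components of $D$, peeling off one component $F$ at a time via the Esnault--Viehweg short exact sequence $0 \to \wedge^p\lkah{X'} \to \wedge^p\lkah{X} \to \wedge^{p-1}\lkah{\hat F} \to 0$ and matching with $\tbar_1$ through the class identities $[F]=[\hat F]\cdot P$ and $[X']-[\hat F]=[X]-[F]$.
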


\begin{proof}
    If $\tilde X \to X$ is a log modification and both $\tilde X, X$ have smooth, projective underlying scheme, Corollary \ref{cor:E'polyslogmodlogsmooth} equates their $\Elog$-polynomials. The $\t$-polynomials are similarly unchanged. Section \S \ref{ss:cohomstrsheaf} shows $\Elogbar_2(X) = \tbar_2(X)$. 

    To show that $\Elogbar_1(X)=\tbar_1(X)$, we may, after applying a suitable log blowup, assume that $X = (X, D)$ is an s.n.c.~pair. We will now prove the assertion with induction first on the dimension of $X$ and then on the number of irreducible components of the divisor $D$. For the base cases we note that case $\dim X = 0$ is clear and that if $D = \varnothing$, both $\Elogbar_1(X)$ and $\tbar_1(X)$ are immediately equal to the classical $e$-polynomial of $X = X^\circ$, evaluated at $v = -1$.

Now assume $D\neq \varnothing$, let $F$ be a component of $D$, and let $D'=D-F$. Then $F$ meets $D'$ transversely and $D' \cup F = D$. Moreover, let $X'$ be the scheme $X^\circ$ endowed with the divisorial log structure defined by $D'$ and denote by $\hat F$ the scheme $F$ endowed with the divisorial log structure defined by the s.n.c.~divisor $D'\vert_F$.
By \cite[Property 2.3 b)]{EsnaultViehweg} we have, for every $p\in \ZZ$, a short exact sequence
\[
0 \to 
    \wedge^p\lkah{X'} \to 
        \wedge^p\lkah{X} \to 
            \wedge^{p-1} \lkah{\hat F} \to 
                0 \ .
\]

By the additivity of the holomorphic Euler characteristic it follows that 
\[
    \Elogbar_1(X) = \Elogbar_1(X') + u \Elogbar_1(\hat F),
\]
which, by induction hypothesis, equals
\[
\tbar_1(X') + u \tbar_1(\hat F)
\]

    It remains to show that $\tbar_1(X') + u \tbar_1(\hat F)$ is equal to $\tbar_1(X)$. 
    We first prove that $[F] = [\hat F] \cdot P$ in $\klvar{\CC}$.
    
    The class $[F]$ is the sum $[F \setminus D'] + [D']$. The log scheme $[F \setminus D']$ has locally constant, rank-one log structure. It's equivalent to the constant rank-one log structure by Corollary~\ref{cor:locconstequivconst}
    \[[F \setminus D'] = [\hat F \setminus D'] \cdot P.\]
    Apply the same argument to the strata of the intersections of $D'$. 

    We can conclude that $\tbar_1(F) = \tbar_1(\hat F) \tbar_1(P) = \tbar_1(\hat F)(1 + u)$, so $u\tbar_1(\hat F) = \tbar_1(F) - \tbar_1(\hat F)$. But the classes
    \[[X'] - [\hat F] = [X] - [F]\]
    are equal. So their $t$-polynomials agree and we obtain
    \[\tbar_1(X) = \tbar_1(X') + \tbar_1(F) - \tbar_1(\hat F) = \tbar_1(X') + u \cdot \tbar_1(\hat F)\]
    and we are done. 
\end{proof}

\subsection{Duality}

Write $\LL \coloneqq [(\Aff^1)^\circ]$
for the Lefschetz motive, the class of the affine line with trivial log structure. 

After inverting $\LL$, the usual Grothendieck ring of varieties $\ordkvar{\CC}$ admits a ring involution
\[ (-)^\vee \colon \ordkvar{\CC}[\LL^{-1}] \to \ordkvar{\CC}[\LL^{-1}] \]
determined by $[X] \mapsto [X]^\vee = [X] / \LL^{\dim X}$ for all smooth projective complex varieties $X$ \cite{Bittner2004}. 

\begin{example}
    The dual of $[\PP^1]$ is 
    \[[\PP^1]^\vee = [\PP^1] / \LL = (\LL + 1) / \LL = 1 + \LL^{-1}.\]
    As $(-)^\vee$ is a ring homomorphism, 
    \[\LL^\vee = \LL^{-1}, \qquad [\GG_m]^\vee = \LL^{-1} - 1 = -\dfrac{\bra{\GG_m}}{\LL}.\] 
\end{example}

We show duality $(-)^\vee$ extends in two ways to the log Grothendieck ring of varieties
\[
i_1, i_2 : \klvar{\CC}[\LL^{-1}] \longrightarrow \klvar{\CC}[\LL^{-1}].
\]
Define 
\begin{equation}\label{eqn:dualities}
    i_1(P) \coloneqq -P \cdot \LL^{-1}, \qquad i_2(P) \coloneqq (P + [\GG_m]) \cdot \LL^{-1}
\end{equation}
and let $i_1([X^\circ]) = i_2([X^\circ]) \coloneqq [X^\circ]^\vee$ extend the usual duality on schemes $X^\circ$ with trivial log structure. 

\begin{lemma}
    The assignments from \eqref{eqn:dualities} yield well-defined ring homomorphisms which are involutions $i_1^2 = i_2^2 = id$. They satisfy
    \begin{equation}\label{eqn:dualitycompatible1}
    (-)^\vee \circ \tau = \tau \circ i_1, \qquad (-)^\vee \circ \rho = \rho \circ i_1
    \end{equation}
    \begin{equation}\label{eqn:dualitycompatible2}
    (-)^\vee \circ \tau = \rho \circ i_2, \qquad (-)^\vee \circ \rho = \tau \circ i_2
    \end{equation}
    with the log Betti and log Hodge maps $\tau, \rho$. 
\end{lemma}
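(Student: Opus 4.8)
The plan is to reduce every assertion to the presentation
$\klvar{\CC}[\LL^{-1}] \simeq \ordkvar{\CC}[\LL^{-1}][P]/(P^2 + P[\GG_m])$ obtained by inverting $\LL$ in Theorem~\ref{thm:vogelpresentation}: a ring homomorphism out of $\klvar{\CC}[\LL^{-1}]$ is precisely a ring homomorphism out of $\ordkvar{\CC}[\LL^{-1}]$ together with the image of $P$, subject only to the single relation. Throughout I would use the three identities $\LL = [\GG_m] + 1$, $\LL^\vee = \LL^{-1}$, and $[\GG_m]^\vee = \LL^{-1} - 1$ recorded above.

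First I would check that $i_1$ and $i_2$ are well defined. Since each is required to restrict to $(-)^\vee$ on $\ordkvar{\CC}[\LL^{-1}]$, and $(-)^\vee$ sends $[\GG_m]$ to $[\GG_m]^\vee = \LL^{-1}-1$, the only thing to verify is $i_j(P)^2 + i_j(P)\,[\GG_m]^\vee = 0$ in $\klvar{\CC}[\LL^{-1}]$. For $i_1(P) = -P\LL^{-1}$ one expands, substitutes $P^2 = -P[\GG_m]$, and collapses $[\GG_m]+1$ to $\LL$ to land on $-P\LL^{-1} + P\LL^{-1} = 0$; for $i_2(P) = (P+[\GG_m])\LL^{-1}$ one first simplifies $(P+[\GG_m])^2 = [\GG_m](P+[\GG_m])$ (again via $P^2 = -P[\GG_m]$) and then runs the same $[\GG_m]+1 = \LL$ collapse. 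This shows $i_1, i_2$ are ring homomorphisms.

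Next, involutivity and the four compatibilities are all equalities of ring homomorphisms with source $\klvar{\CC}[\LL^{-1}]$, so in each case it suffices to compare restrictions to $\ordkvar{\CC}[\LL^{-1}]$ and values on $P$. For $i_j^2$ the restriction is $((-)^\vee)^2 = \mathrm{id}$, and on $P$ one uses $i_j(\LL^{-1}) = (\LL^\vee)^{-1} = \LL$ together with $[\GG_m]+1 = \LL$ to get $i_1(i_1(P)) = -(-P\LL^{-1})\LL = P$ and $i_2(i_2(P)) = \big((P+[\GG_m])\LL^{-1} + (\LL^{-1}-1)\big)\LL = P$. For the relations in \eqref{eqn:dualitycompatible1} and \eqref{eqn:dualitycompatible2}: since $\tau$ and $\rho$ are $\ordkvar{\CC}[\LL^{-1}]$-algebra maps, both sides of each identity restrict to $(-)^\vee$ on $\ordkvar{\CC}[\LL^{-1}]$, so only the value on $P$ remains, and this is immediate from $\tau(P) = 0$, $\rho(P) = -[\GG_m]$, $[\GG_m] = \LL - 1$, $[\GG_m]^\vee = \LL^{-1}-1$; for instance $\rho(i_1(P)) = -\rho(P)\LL^{-1} = [\GG_m]\LL^{-1} = 1-\LL^{-1} = -[\GG_m]^\vee = \rho(P)^\vee$, and the remaining three cases go the same way.

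There is no real obstacle here; the lemma is a bookkeeping exercise. The one point that deserves care is that the target of $i_j$ is again $\klvar{\CC}[\LL^{-1}]$, so in the well-definedness step the $[\GG_m]$ occurring in the relation must be read as its image $[\GG_m]^\vee$ — and it is precisely this that forces the $\LL^{-1}$ twists in the definitions \eqref{eqn:dualities}.
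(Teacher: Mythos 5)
Your proposal is correct and follows essentially the same route as the paper: reduce everything to the presentation $\ordkvar{\CC}[\LL^{-1}][P]/(P^2+P[\GG_m])$, check that each $i_j$ kills the relation (the paper does this by factoring $i_j(P(P+[\GG_m]))$ as $P(P+[\GG_m])\LL^{-2}$ rather than substituting $P^2=-P[\GG_m]$, but the computation is the same), and verify involutivity and the compatibilities on the single generator $P$. If anything you are more thorough than the paper, which omits the explicit verification of \eqref{eqn:dualitycompatible1} and \eqref{eqn:dualitycompatible2}.
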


\begin{proof}
    To check $i_1, i_2$ are well-defined, we need to show they kill $P(P + [\GG_m])$:
    \begin{align*}
    i_1(P \cdot (P + [\GG_m])) &= \dfrac{-P}{\LL} \left( \dfrac{-P}{\LL} - \dfrac{[\GG_m]}{\LL}\right)    \\
        &=P \cdot (P + [\GG_m]) \cdot \LL^{-2} = 0.         \\
    i_2(P \cdot (P + [\GG_m])) &= \dfrac{P + [\GG_m]}{\LL} \cdot \left(
    \dfrac{P + [\GG_m]}{\LL} - \dfrac{[\GG_m]}{\LL}
    \right)         \\
        &=(P + [\GG_m]) \cdot P \cdot \LL^{-2} = 0.
    \end{align*}
    They are involutions as we have $i_1^2(P) = i_2^2(P) = P$. 
\end{proof}

The ring involutions $i_1, i_2$ are determined by duality $(-)^\vee$ on schemes with trivial log structure and \eqref{eqn:dualitycompatible1}, \eqref{eqn:dualitycompatible2}, up to a small discrepancy. 

\begin{lemma}
    Let $F : \klvar{\CC}[\LL^{-1}] \to \klvar{\CC}[\LL^{-1}]$ be a ring endomorphism extending $(-)^\vee$. If $F$ satisfies \eqref{eqn:dualitycompatible1}, the difference $F - i_1$ is annihilated by $[\GG_m]$
    \[[\GG_m] \cdot (F - i_1) = 0.\]
    The same goes for $\eqref{eqn:dualitycompatible2}$ and $i_2$. 
\end{lemma}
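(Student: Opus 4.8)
The plan is to exploit the presentation $\klvar{\CC}[\LL^{-1}] = \ordkvar{\CC}[\LL^{-1}][P]/(P(P+[\GG_m]))$ from Theorem~\ref{thm:vogelpresentation}. Since both $F$ and $i_1$ are ring homomorphisms restricting to $(-)^\vee$ on $\ordkvar{\CC}[\LL^{-1}]$, the difference $F - i_1$ (as a map of abelian groups) is completely controlled by the single value $F(P) - i_1(P)$: for any $a + bP$ with $a, b \in \ordkvar{\CC}[\LL^{-1}]$, we get $(F-i_1)(a + bP) = b^\vee\cdot(F(P) - i_1(P))$. So the whole lemma reduces to showing $[\GG_m]\cdot(F(P) - i_1(P)) = 0$ in $\klvar{\CC}[\LL^{-1}]$.

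First I would write $\delta \coloneqq F(P) - i_1(P) \in \klvar{\CC}[\LL^{-1}]$ and compute $\tau(\delta)$ and $\rho(\delta)$. Applying $\tau$ to the hypothesis $(-)^\vee\circ\tau = \tau\circ F$ and comparing with $(-)^\vee\circ\tau = \tau\circ i_1$ (which holds by the previous lemma) gives $\tau(F(P)) = \tau(i_1(P))$, i.e. $\tau(\delta) = 0$; likewise $\rho(\delta) = 0$ from the $\rho$-half of \eqref{eqn:dualitycompatible1}. Now recall $\tau(P) = 0$ and $\rho(P) = -[\GG_m]$, so on the generator $P$ the combination $\rho - \tau$ sends $P \mapsto -[\GG_m]$ and is the identity on $\ordkvar{\CC}[\LL^{-1}]$-coefficients of $P^0$... more precisely, writing $\delta = a + bP$ with $a,b \in \ordkvar{\CC}[\LL^{-1}]$, the conditions $\tau(\delta) = a = 0$ and $\rho(\delta) = a - b[\GG_m] = 0$ force $a = 0$ and $b[\GG_m] = 0$. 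Hence $\delta = bP$ with $b[\GG_m] = 0$, so $[\GG_m]\cdot\delta = b[\GG_m]P = 0$, which is exactly the claim $[\GG_m]\cdot(F - i_1) = 0$.

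For the $i_2$ statement the argument is identical but uses \eqref{eqn:dualitycompatible2} instead: from $(-)^\vee\circ\tau = \rho\circ F$ and $(-)^\vee\circ\tau = \rho\circ i_2$ one gets $\rho(F(P)) = \rho(i_2(P))$, and from the $\rho$-half one gets $\tau(F(P)) = \tau(i_2(P))$; writing $F(P) - i_2(P) = a + bP$ the same two linear equations $a = 0$, $a - b[\GG_m] = 0$ appear, giving $[\GG_m]\cdot(F - i_2) = 0$.

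The only point requiring a little care — and the one I'd treat as the main obstacle, though it is mild — is justifying that $\tau, \rho$ extend to $\klvar{\CC}[\LL^{-1}]$ and that the compatibilities \eqref{eqn:dualitycompatible1}, \eqref{eqn:dualitycompatible2} (stated in the previous lemma after inverting $\LL$) genuinely let us pass the hypothesis on $F$ through $\tau$ and $\rho$. Since $\tau([\GG_m]) = [\GG_m]$ and $\rho([\GG_m]) = [\GG_m]$ are units' preimages... in fact $[\GG_m] = \LL - 1$ is not a unit, but $\tau(\LL) = \rho(\LL) = \LL$ is inverted, so $\tau$ and $\rho$ do extend to the localizations, and the displayed identities are exactly what was proved for $i_1, i_2$ in the preceding lemma. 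Once that bookkeeping is in place the proof is the short linear-algebra computation above.
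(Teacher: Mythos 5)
Your proposal is correct and follows essentially the same route as the paper: the paper writes $F(P) = \alpha + \beta P$ and checks $\alpha = 0$ and $[\GG_m](\beta + \LL^{-1}) = 0$, which is exactly your computation after the change of variables $a = \alpha$, $b = \beta + \LL^{-1}$, and the implicit method of "checking" is precisely your application of $\tau$ and $\rho$ to the hypothesis at the generator $P$. Your added bookkeeping (that $\tau,\rho$ extend to the localization since $\tau(\LL)=\rho(\LL)=\LL$, and that $F - i_1$ is determined by its value on $P$) is correct and fills in details the paper leaves unstated.
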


\begin{proof}
    Let $F : \klvar{\CC}[\LL^{-1}] \to \klvar{\CC}[\LL^{-1}]$ be a ring endomorphism extending $(-)^\vee$ and write $F(P) = \alpha + \beta P$ for some $\alpha, \beta \in \ordkvar{\CC}[\LL^{-1}]$.

    If $F$ satisfies \eqref{eqn:dualitycompatible1}, check $\alpha = 0$ and 
    \[
    [\GG_m] \cdot (\beta + \LL^{-1}) = 0.
    \]
    If $F$ satisfies \eqref{eqn:dualitycompatible2}, $\alpha = \dfrac{\GG_m}{\LL}$ and 
    \[
    [\GG_m] \cdot (\beta - \LL^{-1}) = 0.
    \]
\end{proof}

If we invert $[\GG_m]$ by tensoring with $\ordkvar{\CC}[[\GG_m]^{-1}]$, then $i_1, i_2$ are the unique ring endomorphisms extending $(-)^\vee$.

\begin{example}
    Let $X$ be a proper toric variety of dimension $n$. Then using Proposition~\ref{prop:toricclass} we can compute
    \[i_j(X) = (-1)^{n(j+1)} \LL^{-n} \cdot [X].\]
\end{example}

\begin{remark}[Duality]
    
For a smooth, projective scheme $X$, Serre duality equates the Hodge numbers $h^{p, q} = h^{n-p, n-q}$. The $e$-polynomial of the dual $X^\vee$ is then
\[e(X^\vee) = e(X)/e(\LL^{\dim X}) = e(X)/(uv)^{\dim X} = e(X)(u^{-1}, v^{-1})\]
and the square
\[
\begin{tikzcd}
    \ordkvar{\CC}[\LL^{-1}] \ar[r, "e"] \ar[d, "-^\vee", swap]       &\ZZ[u^\pm, v^\pm] \ar[d, "I"]      \\
    \ordkvar{\CC}[\LL^{-1}] \ar[r, "e", swap]      &\ZZ[u^\pm, v^\pm]
\end{tikzcd}
\]
commutes, where $I(p) \coloneqq p(u^{-1}, v^{-1})$. 

The equality $\rho \circ i_1 = (-)^\vee \circ \rho$ gives 
\begin{equation}\label{eqn:tlogserreduality}
    \tbar_1(i_1(X)) = \tbar_1(X)(u^{-1}).
\end{equation}
Endow a smooth, projective scheme $X^\circ$ of dimension $n$ with constant free rank-$k$ log structure $X$. Then $i_1(X) = (-1)^k \LL^{-k} P^k {X^\circ}^{\vee} = (-1)^k \LL^{-(k+n)} X$ and Theorem \ref{thm:E=t} rewrites \eqref{eqn:tlogserreduality} as
\[\dfrac{(-1)^k \tbar_1(X)}{\tbar_1(\LL^{k+n})} = \dfrac{(-1)^k \Elogbar_1(X)}{u^{k+n}} = \Elogbar_1(X)(u^{-1}).\]
Whence $\chi(\wedge^{k+n-i} \lkah{X}) = (-1)^k \chi(\wedge^i \lkah{X})$, a limited ``log Serre duality.''

\end{remark}

\begin{example}
    Let $X = (\PP^1, \NN)$ be the projective line with constant rank-one log structure. Its log K\"ahler differentials are
    \[\lkah{X} = \OO(-2) \oplus \OO,\]
    with wedges
    \[
    \wedge^p \lkah{X} = 
    \left\{
    \begin{tikzcd}[row sep = tiny]
        \OO      &p = 0   \\
        \OO(-2) \oplus \OO  &p = 1      \\
        \OO(-2)     &p = 2         \\
        0   &\text{otherwise}.
    \end{tikzcd}
    \right.
    \]
    Its log Hodge ``diamond'' is then the rectangle
    \[\begin{tabular}{c|c c c}
        $p$:      &0      &1        &2      \\ 
        \hline
         $q = 0$  &1  &1  &0                 \\
         $q = 1$  &0  &1  &1 
    \end{tabular},\]
    and 
    \[\chi(\OO) = 1, \qquad \chi(\lkah{X}) = 0, \qquad \chi(\wedge^2 \lkah{X}) = -1.\]
\end{example}

\begin{remark}\label{rmk:bittner}
    We sketch a ``Bittner presentation'' \cite{Bittner2004} for the log Grothendieck ring, which doesn't seem to work as well as for the ordinary Grothendieck ring. 
    
    Consider the free group generated by isomorphism classes of $\CC$-log schemes $X$ with smooth, projective underlying scheme. Let $\Bitt$ be the quotient of this free group by three relations: 
    \begin{itemize}
        \item $[\varnothing] \sim 0$,
        \item If $\tilde X \to X$ is a log blowup, then $[X] \sim [\tilde X]$,
        \item If $Z \subseteq X$ is a strict closed subscheme and $B \to X$ its blowup, write $E \subseteq B$ for the exceptional divisor. Endow $Z, B, E$ with log structure pulled back from $X$. Then 
        \[[B] - [E] \sim [X] - [Z].\]
    \end{itemize}

    Let $\Bitt' \subseteq \Bitt$ be the subgroup generated by constant log schemes with smooth, projective underlying scheme. The arguments of \cite{Bittner2004} identify $\Bitt' \simeq \klvar{\CC}$. It is not clear to us if this is the whole group $\Bitt' \overset{?}{=} \Bitt$.

    Given a finite type, separated variety $V$ with log structure and a compactification $V \subseteq \bar V$, it's unclear how to extend the log structure to $\bar V$. Moreover, general log schemes $X$ do not admit a ``resolution'' $\tilde X \to X$ by log smooth schemes in the usual sense, even over $\CC$. For the standard log point $P$, there is no map $X \dashrightarrow P$ from a nonempty log flat log scheme $X$. See \cite{minghaologresolution} for the case where $X$ is generically log smooth and nice. 
    
\end{remark}

\appendix

\section{Cohomology of fibers of toric blowups}\label{appendix:rothemail}

We work over a field $k$. Let $B \to \Aff^n$ a subdivision. Write $\vec 0 \in \Aff^n$ for the origin and $B_0 \subseteq B$ for the fiber over $\vec 0$. Suppose $B \to \Aff^n$ factors through the log blowup $Bl_{\vec 0} \Aff^n$ at the ideal $(x_1, \cdots, x_n) \subseteq k[x_1, \cdots, x_n]$.

The goal of this appendix is to compute the cohomology of $B_0$. The proof is due to Mike Roth, who graciously encouraged the authors to write up his arguments.

\begin{theorem}\label{thm:rothemail}
The cohomology of $B_0$ is $k$:
\[R\Gamma(B_0, \OO_{B_0}) = k.\]
In other words, $H^q(B_0, \OO_{B_0}) = 0$ for $q \neq 0$ and $H^0(B_0, \OO_{B_0}) = k$.
\end{theorem}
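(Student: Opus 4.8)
The plan is to push everything forward along $g$ to $Bl_{\vec 0}\Aff^n$ rather than all the way down to $\Aff^n$, exploiting that $B_0$ is a Cartier divisor upstairs. Write $\pi\colon B\to\Aff^n$ for the given morphism, factored by hypothesis as $B\xrightarrow{\,g\,}Bl_{\vec 0}\Aff^n\xrightarrow{\,p\,}\Aff^n$, and let $E\cong\PP^{n-1}$ be the exceptional divisor of $p$. Since $\pi$ factors through the blowup, the ideal $(x_1,\dots,x_n)\OO_B$ is invertible; and because $(x_1,\dots,x_n)\OO_{Bl_{\vec 0}\Aff^n}=\OO_{Bl_{\vec 0}\Aff^n}(-E)$, the scheme-theoretic fiber $B_0=B\times_{\Aff^n}\vec 0$ is precisely the effective Cartier divisor on $B$ cut out by $g^{*}\OO_{Bl_{\vec 0}\Aff^n}(-E)$. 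In particular there is a short exact sequence on $B$
\[ 0 \longrightarrow g^{*}\OO_{Bl_{\vec 0}\Aff^n}(-E) \longrightarrow \OO_{B} \longrightarrow \OO_{B_{0}} \longrightarrow 0 . \]

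First I would establish that $Rg_{*}\OO_B=\OO_{Bl_{\vec 0}\Aff^n}$. The morphism $g$ is proper (as $\pi$ is proper and $p$ is separated) and birational, and — being compatible with the torus actions — it is the toric morphism attached to a refinement of fans (the fan of $B$ refines the star subdivision of $\RR^n_{\ge 0}$). For such refinement morphisms one has $Rg_{*}\OO=\OO$: over $\CC$ this is the statement that toric varieties have rational singularities, and in arbitrary characteristic it follows from the standard combinatorial computation of the $M$-graded pieces of the cohomology of $\OO$, each of which is the (co)homology of a convex/star-shaped — hence contractible — or empty subcomplex of the subdivided cone, so it does not depend on the characteristic.

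Then I would apply $Rg_{*}$ to the short exact sequence above. By the projection formula, $Rg_{*}\bigl(g^{*}\OO_{Bl_{\vec 0}\Aff^n}(-E)\bigr)\cong \OO_{Bl_{\vec 0}\Aff^n}(-E)\otimes^{\mathbb L}Rg_{*}\OO_B\cong\OO_{Bl_{\vec 0}\Aff^n}(-E)$, and the map it induces to $Rg_{*}\OO_B=\OO_{Bl_{\vec 0}\Aff^n}$ is the tautological inclusion $\OO(-E)\hookrightarrow\OO$, whose cofiber is $\OO_E$. Hence $Rg_{*}\OO_{B_{0}}\cong\OO_E$, concentrated in degree $0$, and therefore
\[ R\Gamma(B_{0},\OO_{B_{0}}) = R\Gamma\bigl(Bl_{\vec 0}\Aff^n, Rg_{*}\OO_{B_{0}}\bigr) = R\Gamma(E,\OO_{E}) = R\Gamma(\PP^{n-1},\OO_{\PP^{n-1}}) = k , \]
using $H^{>0}(\PP^{n-1},\OO)=0$ and $H^{0}(\PP^{n-1},\OO)=k$. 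This gives the theorem, and in particular shows $B_0$ is connected.

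The only real content is the input $Rg_{*}\OO_B=\OO_{Bl_{\vec 0}\Aff^n}$, i.e. the vanishing of the higher direct images together with the absence of new global functions for the proper — but \emph{not} finite, and generally singular — toric morphism $g$. In characteristic zero this is immediate from rational singularities of toric varieties; in positive characteristic it is exactly the step that needs the combinatorial argument sketched above. Everything else (recognizing $B_0$ as $g^{*}E$, the projection formula, and $R\Gamma(\PP^{n-1},\OO_{\PP^{n-1}})=k$) is routine formal manipulation.
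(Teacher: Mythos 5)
Your proof is correct and follows essentially the same route as the paper's: identify $B_0$ as the pullback of the Cartier divisor $E=\PP^{n-1}\subseteq Bl_{\vec 0}\Aff^n$ along $g\colon B\to Bl_{\vec 0}\Aff^n$, push the resulting short exact sequence forward using $Rg_*\OO_B=\OO$ and the projection formula to get $Rg_*\OO_{B_0}=\OO_E$, and conclude from $R\Gamma(\PP^{n-1},\OO)=k$ (the paper packages the pushforward step as Lemma~\ref{lem:derpfwdstrsheafdivisor}). If anything, you are slightly more careful than the printed proof in two places: you explicitly justify the key input $Rg_*\OO_B=\OO$ via the toric refinement argument, and you deduce that $B_0$ is an effective Cartier divisor directly from the universal property of the blowup rather than from the paper's embedded-component argument.
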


The main obstacle to proving this theorem is that $B_0$ need not be reduced. 

\begin{example}[Karl Schwede]
Let $B \to \Aff^2$ be the log blowup at the monoidal ideal
\[I = (x^2, y) \cdot (x, y) \cdot (x, y^2) \qquad \subseteq \NN^2.\]
The fiber over $\vec 0 \in \Aff^2$ has three components, and the middle one is nonreduced.

\end{example}

If we replace $B_0$ by its reduced subscheme $B_{0, red}$, a similar vanishing 
\[
H^*(B_{0, red}, \OO_{B_{0, red}}) = \CC
\]
was shown in \cite[Lemma 2.5]{mirkoshinder}. The proof refers to special cases in \cite[Proposition 3.1]{dubois}, \cite[Proposition 3.7 and \S 3.6]{mixedhodgesteenbrink}, \cite[Proposition 8.1.11.(ii) and 8.1.12]{ishiiintrosingularities}, \cite[Lemma 1.2]{namikawadeformation}. Theorem \ref{thm:rothemail} also generalizes a result of Molcho--Wise \cite[Corollary 6.4]{molchowiselogetaledescentnote}.

\begin{lemma}\label{lem:derpfwdstrsheafdivisor}
    Let $\pi : X \to Y$ be a birational, perfect morphism such that $R\pi_* \OO_X = \OO_Y$. Let $E \subseteq Y$ be an effective Cartier divisor and $D \subseteq X$ its pullback. Then the structure sheaf of the divisor $D$ pushes forward to that of $E$
    \[R\pi_* \OO_D = \OO_E.\]
\end{lemma}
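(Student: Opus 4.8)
The plan is to reduce the statement to the formal fact that, under the hypothesis $R\pi_*\OO_X \simeq \OO_Y$, the functor $R\pi_*$ is left inverse to $L\pi^*$ on perfect complexes, and then to apply this to the structure sheaf of $E$. For the first step I would observe that for any perfect complex $\mathcal G$ on $Y$ the projection formula \cite[0B54]{sta} gives a natural isomorphism
\[
R\pi_*\, L\pi^*\mathcal G \;\simeq\; \mathcal G \otimes^L R\pi_*\OO_X \;\simeq\; \mathcal G .
\]
A line bundle, and the two-term Koszul complex appearing below, are perfect, so this applies in the situation we need; the hypothesis that $\pi$ be perfect is used only to keep the pulled-back complexes within the perfect world.

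The second step is to compute $L\pi^*\OO_E$. Since $E$ is an effective Cartier divisor it is locally cut out by a nonzerodivisor $f$, so $\OO_E$ is resolved by $\big[\,\OO_Y(-E)\xrightarrow{\,f\,}\OO_Y\,\big]$; pulling back, $L\pi^*\OO_E$ is computed by $\big[\,\OO_X(-D)\xrightarrow{\,\pi^\# f\,}\OO_X\,\big]$ with $\OO_X(-D)=\pi^*\OO_Y(-E)$ invertible. Because $D$ is the pullback of $E$ as an effective Cartier divisor, $\pi^\# f$ is again a nonzerodivisor, so this complex has no $H^{-1}$ and resolves $\OO_D$; hence $L\pi^*\OO_E\simeq\OO_D$, and the first step yields $R\pi_*\OO_D\simeq\OO_E$. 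To check the identification is the natural one, I would apply $R\pi_*L\pi^*$ to the exact triangle $\OO_Y(-E)\to\OO_Y\to\OO_E\xrightarrow{+1}$: by naturality of the isomorphism above it is carried to itself, while termwise it is the push-forward of $0\to\OO_X(-D)\to\OO_X\to\OO_D\to 0$, whose first term pushes to $\OO_Y(-E)$; comparing the cones of $\OO_Y(-E)\hookrightarrow\OO_Y$ gives $R\pi_*\OO_D\simeq\OO_E$ compatibly with the maps from $\OO_Y$.

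The homological manipulations here are entirely formal, so the one point I expect to need care is the claim used above that the pullback divisor $D$ is an effective Cartier divisor on $X$, i.e. that no higher $\mathrm{Tor}$ appears in $L\pi^*\OO_E$. This is automatic when $\pi$ is flat, and for $\pi$ birational it holds as soon as $X$ has no embedded points meeting $E$ --- which is the case in the intended applications (log blowups of reasonable log schemes). I would isolate this as a short preliminary remark, using the ``perfect'' and ``birational'' hypotheses, and then run the two steps above to conclude.
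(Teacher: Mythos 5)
Your proposal is correct and follows essentially the same route as the paper: both arguments combine the projection formula with the two-term resolution $\OO_Y(-E)\to\OO_Y$ of $\OO_E$, and both hinge on the same delicate point, namely that the local equation of $E$ pulls back to a nonzerodivisor so that $\OO_X(-D)\to\OO_X$ stays injective (the paper deduces this from birationality, where you defer it to the intended applications). The only cosmetic difference is that you phrase the computation as $R\pi_*L\pi^*\OO_E\simeq\OO_E$ while the paper pushes forward the short exact sequence $0\to\OO_X(-D)\to\OO_X\to\OO_D\to 0$ and identifies the first two terms.
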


\begin{proof}
    By the projection formula \cite[0B54]{sta} and our hypothesis, any line bundle $L$ on $Y$ satisfies
    \begin{equation}\label{eqn:derivedpfwdlinebund}
        R\pi_* \pi^* L = L.
    \end{equation}

    We claim the ideal sheaves pull back $\pi^* \OO_Y(-E) = \OO_X(-D)$. This is a local question, so assume $Y = \Spec B$ and $E = V(f)$. Then $\OO_Y(-E) \to \OO_Y$ is multiplication $f : B \to B$ by the nonzerodivisor $f$. The ideal sheaf $\OO_X(-D)$ is the image of the pullback $\pi^* \OO_Y(-E) \to \pi^* \OO_Y = \OO_X$, which we need to show is injective. Localize in $X$ to assume $X = \Spec A$. Then multiplication $f : A \to A$ is also injective, otherwise $X$ has an embedded component over $Y$. But the map $\pi$ is assumed to be birational.

    Consider the exact sequence 
    \[0 \to \OO_X(-D) \to \OO_X \to \OO_D \to 0\]
    on $X$. Apply $R\pi_*$ to get an exact triangle 
    $R\pi_* \OO_X(-D) \to R\pi_* \OO_X \to R \pi_* \OO_D \overset{+1}{\to}$.
    By taking $L = \OO_Y(E), \OO_Y$ in \eqref{eqn:derivedpfwdlinebund}, the triangle becomes 
    \[\OO_X(-E) \to \OO_X \to R \pi_* \OO_D \overset{+1}{\to}\]
    and we can see $R\pi_* \OO_D = \OO_E$. 
\end{proof}

\begin{proof}[{Proof of Theorem \ref{thm:rothemail}}]
    Write $\pi : B' \coloneqq Bl_{\vec 0} \Aff^n \to \Aff^n$ for the log blowup at the ideal $(x_1, \cdots, x_n)$. By assumption on $B$, it factors through this blowup $\rho : B \to B'$. Write $B_0, B'_0$ for the fiber over the origin $\vec 0 \in \Aff^n$. Apply Lemma \ref{lem:derpfwdstrsheafdivisor} to $\rho$ with $D = B_0, E = B'_0$. Then $R \rho'_* \OO_{B''_0} = \OO_{B'_0}$. But $B'_0 = \PP^{n-1}$, so $R\pi'_* \OO_{B'_0} = \OO_{\vec 0}$. Composing these derived functors, we get $R(\pi' \circ \rho')_* \OO_{B''_0} = \OO_{\vec 0}$. Taking global sections gives the result. 
\end{proof}

\bibliographystyle{alpha}
\bibliography{zbib}

\end{document}